\documentclass[a4paper,twoside,12pt]{amsart}

\usepackage{amsfonts,amssymb,amscd,amsmath,enumerate}
\usepackage[greek, french, english]{babel}
\usepackage{graphicx}
\usepackage[all]{xy}
\usepackage{enumerate}
\usepackage{color}
\usepackage[T1]{fontenc}

\newtheorem{theorem}{Theorem}[section]
\newtheorem{proposition}[theorem]{Proposition}
\newtheorem{lemma}[theorem]{Lemma}

\theoremstyle{definition}
\newtheorem{definition}[theorem]{Definition}

\newtheorem{example}[theorem]{Example}
\theoremstyle{remark}
\newtheorem{remark}[theorem]{Remark}

\def\A{{\mathbf A}}

\def\Bb{{\mathcal B}}
\def\C{{\mathbf C}}
\def\Cc{{\mathcal C}}

\def\Ll{{\mathcal L}}

\def\N{{\mathbf N}}

\def\Pp{{\mathcal P}}

\def\Kk{{\mathcal K}}
\def\Q{{\mathbf Q}}

\def\R{\mathbf R}

\def\Z{{\mathbf Z}}
\def\Z{\mathbf Z}
\def\w{\mathbf w}

\def\int{\mathrm{int}}

\providecommand{\bysame}{\leavevmode\hbox to3em{\hrulefill}\thinspace}

\makeindex

\frenchspacing

\textwidth=13.5cm
\textheight=23cm
\parindent=16pt
\oddsidemargin=-0.5cm
\evensidemargin=-0.5cm
\topmargin=-0.5cm

\begin{document}
\bibliographystyle{amsplain}
\setcounter{tocdepth}{2}
\title{ON KAPLANSKY'S EMBEDDING THEOREM}
\author{Bernard Teissier}
\address{Universit\'e Paris Cit\'e and Sorbonne Universit\'e, CNRS, IMJ-PRG, F-75013 Paris, France}

\email{bernard.teissier@imj-prg.fr}

\keywords{Toric geometry, Valuations}

\pagestyle{myheadings}
\markboth{\rm Bernard Teissier}{\rm Kaplansky's embedding}
\renewcommand\rightmark{B. Teissier}
\renewcommand\leftmark{Kaplansky's embedding}

\subjclass[2020]{ 14E15,13A18}

\dedicatory{To the memory of a truly exemplary person and mathematician, Arkadiusz P\l oski}

\begin{abstract} Let $R$ be a complete equicharacteristic noetherian local domain with an algebraically closed residue field $k$. Let $\nu$ be a zero dimensional valuation centered in $R$ with value group $\Phi$. We prove that, if the value semigroup $\Gamma=\nu(R\setminus \{0\})$ of the valuation is finitely generated or the valuation is of rank one, the valuation $\nu$ is the restriction, via an embedding $R\subset k[[t^{\Phi_{\geq 0}}]]$, of the $t$-adic valuation $\nu_t$ of the ring of Hahn series with coefficients in $k$ and exponents in $\Phi_{\geq 0}$. This embedding is given by series $$\xi_i\mapsto\xi_i(t)=\rho_i t^{\gamma_i}+\sum_{\delta>\gamma_i}c^{(i)}_\delta t^\delta \ {\rm with}\  c^{(i)}_\delta\in k,\ \rho_i\in k^*,$$
where the $(\gamma_i)_{i\in I}$ constitute a minimal system of generators, indexed by an ordinal  $I\leq \omega$, of the semigroup $\Gamma=\break\nu(R\setminus \{0\})$. The $\xi_i\in R$ are well chosen representatives of a minimal system of homogeneous generators $(\overline\xi_i)_{i\in I}$ of the graded $k$-algebra ${\rm gr}_\nu R$ associated to $\nu$. The $\rho_i$ parametrize an isomorphism of graded $k$-algebras between ${\rm gr}_\nu R$ and $k[t^\Gamma]$.
 \end{abstract}
\maketitle
\noindent
\begin{section}{Introduction}

Among the problems studied by the founders of general valuation theory, in particular W. Krull and F.K. Schmidt, is the question of determining the extent to which a valued field is determined by its value group and residue field (see \cite{Roq}). This led to the notion of a maximal valued field, which has no valued extension with the same value group and residue field, called \textit{immediate}.   Krull raised the questions of whether an immediate embedding of a valued field in a maximal valued field is unique and whether in the equicharacteristic case a maximal valued field is a field of generalized power series. In the two papers \cite{Ka} and \cite{Ka2}, Kaplansky provided precise anwers to these questions by describing valued extensions using pseudo-convergent sequences in the sense of Ostrowski. The uniqueness holds provided some conditions called condition $(A)$ on the residue field and value group are satisfied (see \cite[Theorem 5]{Ka}) and in the equicharacteristic case the maximal field is a generalized power series field provided the same conditions hold (see \cite[Theorem 6]{Ka}) and in addition the value group is discrete (see \cite[Theorem]{Ka2}). Condition $(A)$ is vacuous if the residue field is of characteristic zero. The combination of these results is Kaplansky's embedding theorem. \par
More recently, the question of embedding \textit{valued noetherian local domains} in rings of generalized power series has become of interest, in particular in connexion with the problem of local uniformization of valuations in positive characteristic. Here a key role is played by the semigroup of values which the valuation takes on the local domain. The motivating example is as follows:\par Let $k$ be an algebraically closed field. A plane algebroid branch is given parametrically by two power series $x(t),y(t)\in k[[t]]$ without constant term whose exponents are coprime, which satisfy a relation $f(x(t),y(t))=0$ with $f(x,y)\in k[[x,y]]$ irreducible and without constant term.\par In characteristic zero, assuming that $f(0,y)\neq 0$, Newton's method produces a series $y(x)$ in rational power of $x$ with bounded denominators and  if $k=\C$ and the series $x(t),y(t)$ converge, one can deduce, as Puiseux was the first to do, a lot of geometric information, up to the bilipschitz geometry and the embedded resolution of singularities algorithm of the corresponding complex analytic branch, from the arithmetical properties of the exponents of $y(x)$. These properties are condensed in the \textit{Puiseux characteristic exponents}. \par\noindent From these properties, one can also deduce (see \cite[Chap. 1]{Z}) a minimal system of generators for the numerical semigroup $\Gamma\subset\N$ of the values taken by the $t$-adic valuation of $k[[t]]$ on the subalgebra $k[[x(t),y(t)]]\subset k[[t]]$. From this set of generators, one can conversely calculate the Puiseux exponents of the Newton series.\par
If $k$ is of positive characteristic, Newton's method does not work well at all; an expression of $y$ as a series in $x$ may have a very complicated set of exponents (see \cite{Ke}), with unbounded denominators, as is shown by the Artin-Schreier example which is recalled below. However, the semigroup $\Gamma$ still determines the embedded resolution of singularities process of the plane branch (see \cite[Remark 5.4]{Te2}), and therefore a lot of its geometry. We recommend \cite{C}, \cite{GB-P} and \cite{P-P} as very informative studies of plane algebroid curves in any characteristic.\par
If we accept to describe our curve by a parametrization in a higher dimensional space $\A^{b}(k)$, as $u_i=\xi_i(t),\ i=1,\ldots, b$, with $\xi_i\in k[[t]]$, we can do this in such a way that $\xi_i(t)=\rho_it^{\gamma_i}+\sum_{\delta>\gamma_i}c_\delta^{(i)}t^\delta$ with $\rho_i\in k^*,\ c_k^{(i)}\in k$, where $\gamma_1,\ldots ,\gamma_{b}$ is a minimal system of generators of the semigroup $\Gamma$.\par\noindent So we do not need to do any arithmetic with the exponents of the series $\xi_i(t)$ to extract the semigroup, which determines the equisingularity class of the plane branch in any characteristic (see \cite[Chapter 3]{C}).\par\noindent To recognize that this parametric representation is that of a plane branch, however, we now need to do arithmetic on the integers $\gamma_i$  and then work some more (see \cite[Appendix]{Z}); we have shifted the place where we have to work in order to describe the geometry of our curve.\par
The geometric idea here is that after eventual re-embedding of our branch in an affine space $\A^b(k)$, it is parametrized by a constant semigroup deformation of the parametrization of the monomial curve $u_i=t^{\gamma_i}$ which is the geometric realization of the semigroup $\Gamma$.\par\noindent
Thus re-embedded, the branch admits (see \cite{G-T}) an embedded resolution of singularities by a single toric modification of the ambient space $\A^b(k)$ which resolves the monomial curve. This resolution is blind to the characteristic (see \cite[\S 6.3]{Te1}).\par 
In higher dimension, a natural generalization of the parametric description of an algebroid branch is local uniformization of rational valuations of excellent local domains.\par We recall that a rational valuation on a local domain $(R,m)$  is the datum of an inclusion $R\subset R_\nu$ of $R$ in a valuation ring of its field of fractions, such that $R_\nu$ dominates $R$ in the sense that $m_\nu\cap R=m$ and the inclusion $k=R/m\subset R_\nu/m_\nu=k_\nu$ is an isomorphism. It corresponds to a $k$-rational point on the Zariski-Riemann manifold of $R$. If $k$ is algebraically closed, rational is the same as zero dimensional.\par A local uniformization is a regular local ring $R_1$ essentially of finite type over $R$ and dominated by $R_\nu$. For branches, $R_1=R_\nu$ but in general valuation rings are not noetherian.\par
What we show in this text generalizes the description of the ring of functions of the branch as a subring of $k[[t]]$ generated by the $\xi_i(t)$: it generalizes the parametric presentation of the branch in a suitable affine space exposing the semigroup generators as exponents of initial terms.\par For this reason we call it the \textit{embedded} Kaplansky embedding theorem.\par\noindent
Geometrically, it is a parametrization of the formal space corresponding to $R$ obtained as a constant semigroup deformation of the parametrization $u_i\mapsto\rho_it^{\gamma_i}$ of the generalized toric variety corresponding to ${\rm gr}_\nu R$. \par It is worth noting that for branches, the deformations which are obtained by deforming the parametrization (as opposed to deforming the equations) are exactly those which admit a simultaneous resolution of singularities (by normalization). See \cite[\S 3]{Te5}.\par We do not address the problem of the uniqueness of the parametric representation in the same way as Kaplansky. Suffice it to say that even for plane algebroid branches in characteristic zero, different parametric representations generating the same semigroup of $t$-adic values  may correspond to isomorphic one dimensional complete local domains with fields of fractions isomorphic to $k((t))$. See \cite[Chapter III]{Z}.\par The main difficulties arise from the fact that the semigroup $\Gamma=\nu(R\setminus\{0\})$ is not finitely generated in general. See section \ref{final} for the basic example.\par 
As a consequence, the "dimension" of the affine space where we re-embed our formal space may be a countable ordinal, which is $\leq \omega$ in the rank one case by \cite[Appendix 2]{Z-S}),
indexing a minimal system of generators of the semigroup $\Gamma$.\par\smallskip
We first recall how one proves, as in \cite[\S 7]{Te2}, the embedded Kaplansky embedding Theorem for rational valuations with finitely generated semigroup, whose value group is $\Z^{{\rm dim}R}$, on equicharacteristic noetherian local domains with an algebraically closed residue field. It is a consequence of local uniformization. Then we extend to all rational valuations of rank one with the additional assumption of completeness of the local domain by using their approximations by Abhyankar semivaluations with finitely generated semigroups.\par\noindent We recall that a semivaluation on $R$ is a valuation on a quotient of $R$ by a prime ideal. A valuation is a semivaluation such that only $0\in R$ gives a value $\infty$ larger than any element of $\Phi$. 
  \end{section}
\begin{section}{Hahn series}
\begin{definition}{\rm (Hahn in \cite{Hahn})}
Let $\Delta$ be a totally ordered abelian semigroup containing a smallest element $0$ and $A$ a commutative ring. The set of formal sums $s=\sum_{\delta\in E}a_\delta t^\delta$ where the set $E$ of exponents of the series is a well ordered subset of $\Delta$ and $a_\delta\in A$ for all $\delta\in E$ can be endowed with a commutative product by the usual multiplication rule of series. It is an $A$-algebra called the Hahn ring of series with exponents in $\Delta$ and coefficients in $A$ and denoted by $A[[t^\Delta]]$.\footnote{See \cite{Hahn}. Some call it the Mal'cev-Neumann ring because Hahn's definition was later generalized to the non abelian setting by these two authors.}
\end{definition}
In this text we will be interested in the case where $A$ is the residue field $k$ of a local subring $R$ of a valuation ring $R_\nu$ and $\Delta$ is the subsemigroup $\nu (R\setminus\{0\})$ of the non negative part $\Phi_{\geq 0}$ of a totally ordered abelian group $\Phi$ of finite rational rank $r$ which is the value group of the valuation.\par
If $R$ is a local noetherian subring dominated by $R_\nu$ and with residue field $k$, the semigroup $\Gamma =\nu(R\setminus \{0\})$ is well ordered and contained in the non negative part $\Phi_{\geq 0}$ of the value group $\Phi$ of $\nu$. We have an inclusion $$k[[t^\Gamma]]\subset k[[t^{\Phi_{\geq 0}}]],$$
where now $k[[t^\Gamma]]$ is the ring of all series with exponents in $\Gamma$.
\par\smallskip
The Hahn ring $k[[t^\Delta]]$ is endowed with a valuation defined as $\nu_\Delta(s)={\rm min}(\delta\vert a_\delta\neq 0)$. Its residue field is $k$ and its value group is the totally ordered abelian group $\Delta+(-\Delta)\subset \Phi$. It is spherically complete with respect to this valuation in the following sense (see \cite{O}, \cite{Ka}, \cite {Roq}):\par\medskip\noindent
A \emph{pseudo-convergent} sequence\footnote{They are also known as pseudo-Cauchy sequences. This concept is due to Ostrowski; see \cite[\S 11, p.368]{O}, \cite{Roq}.} of elements of a valued ring $(R,\nu)$, for example $(k_\nu[[t^\Delta]],\nu_\Delta)$, is a sequence $(y_\tau)_{\tau\in T}$ indexed by a well ordered set $T$ without last element, which satisfies the condition that whenever $\tau<\tau' <\tau"$ we have $\nu (y_{\tau'}-y_\tau)<\nu (y_{\tau"}-y_{\tau'})$.  One observes that if $(y_\tau)$ is pseudo-convergent, for each $\tau\in T$ the value $\nu (y_{\tau'}-y_\tau)$ is independent of $\tau'>\tau$ and can be denoted by $w_\tau$. An element $z$ is said to be a \emph{(pseudo-)limit} of this pseudo-convergent sequence if $\nu (y_{\tau'}-y_\tau)\leq \nu (z-y_\tau)$ for $\tau ,\tau'\in T,\ \tau<\tau'$. This is equivalent to $\nu (z-y_\tau)\geq w_\tau$ for all $\tau$.\par The balls $B(y_\tau, w_\tau)=\{y\in R\ \vert \nu (y-y_\tau)\geq w_\tau\}$ then form a strictly nested family of balls indexed by $T$ and their intersection is the set of limits of the sequence.\par
If $z$ is a limit of the sequence $(y_\tau)_{\tau\in T}$, Ostrowski defined in \cite[Chap. 11, p. 368]{O} (see \cite[p.304]{Ka})  the \textit{breadth} of the pseudo-convergent sequence: it is the ideal $\Bb\subset R$ of elements $z'$ such that $\nu (z')>w_\tau$ for all $\tau\in T$. Intuitively, the breadth corresponds to the "radius" of the intersection of the balls $B(y_\tau, w_\tau)$ with respect to the "ultrametric" $\nu_\Delta(y-x)$.
\begin{definition}\label{spher}{\rm (Ostrowski)}
A valued ring is \textit{spherically complete} if every pseudo-convergent sequence has a limit. Equivalently, every nested sequence of non-empty balls has a non-empty intersection.
\end{definition}
\begin{theorem}\label{complete}{\rm (See \cite[\S 1, \S 2]{Ka}, \cite{Ri})} The valued ring $k[[t^\Delta]]$ is spherically complete.\hfill$\square$
\end{theorem}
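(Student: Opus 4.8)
The plan is to construct a pseudo-limit of an arbitrary pseudo-convergent sequence by hand, exploiting the fact that an element of $k[[t^\Delta]]$ is nothing but a function $\Delta\to k$ with well ordered support. Let $(y_\tau)_{\tau\in T}$ be a pseudo-convergent sequence, write $y_\tau=\sum_\delta a^{(\tau)}_\delta t^\delta$, and let $w_\tau=\nu_\Delta(y_{\tau'}-y_\tau)$ for $\tau'>\tau$; recall from the discussion above that $w_\tau$ is independent of the choice of $\tau'>\tau$ and that $\tau\mapsto w_\tau$ is strictly increasing.

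First I would record the coherence of the coefficients. If $\tau<\tau'$, then $\nu_\Delta(y_{\tau'}-y_\tau)=w_\tau$ forces $a^{(\tau)}_\delta=a^{(\tau')}_\delta$ for every $\delta<w_\tau$. Since the $w_\tau$ increase with $\tau$, for a fixed exponent $\delta$ the coefficient $a^{(\tau)}_\delta$ takes one and the same value for all $\tau$ with $w_\tau>\delta$. This allows me to define a candidate limit $z=\sum_\delta a_\delta t^\delta$ by setting $a_\delta:=a^{(\tau)}_\delta$ for any $\tau$ with $w_\tau>\delta$, and $a_\delta:=0$ when no such $\tau$ exists.

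The crux is to check that $z$ is a genuine Hahn series, i.e. that its support $\mathrm{supp}(z)=\{\delta\mid a_\delta\neq 0\}$ is well ordered. This is the one step where the very definition of $k[[t^\Delta]]$ intervenes, and I expect it to be the main obstacle, although it is not a deep one. The argument I have in mind: given a non-empty $S\subseteq\mathrm{supp}(z)$, pick $s_0\in S$; since $a_{s_0}\neq 0$ there is a $\tau$ with $w_\tau>s_0$, and by construction $\mathrm{supp}(z)\cap[0,w_\tau)=\mathrm{supp}(y_\tau)\cap[0,w_\tau)$. Then $S\cap[0,w_\tau)$ is a non-empty subset of the well ordered set $\mathrm{supp}(y_\tau)$, hence has a least element $m$; and since any element of $S$ that is $\leq s_0$ lies in $[0,w_\tau)$, this $m$ is in fact the least element of $S$. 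Thus $\mathrm{supp}(z)$ is well ordered and $z\in k[[t^\Delta]]$.

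Finally I would verify that $z$ is indeed a pseudo-limit, namely $\nu_\Delta(z-y_\tau)\geq w_\tau$ for all $\tau$. This is immediate: by construction $a_\delta=a^{(\tau)}_\delta$ for every $\delta<w_\tau$, so $z-y_\tau$ has no term of exponent below $w_\tau$. By the characterization of limits recalled before the statement, $z$ then lies in every ball $B(y_\tau,w_\tau)$, so the sequence has a limit; equivalently the nested family of balls has non-empty intersection. This establishes spherical completeness of $k[[t^\Delta]]$.
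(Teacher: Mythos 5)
Your proof is correct: the coefficient-stabilization observation, the definition of the candidate limit $z$, the verification that $\mathrm{supp}(z)$ is well ordered by comparing it with $\mathrm{supp}(y_\tau)$ below $w_\tau$, and the check that $\nu_\Delta(z-y_\tau)\geq w_\tau$ are all sound. The paper itself gives no argument beyond citing \cite[\S 1, \S 2]{Ka} and \cite{Ri}, and your construction is precisely the classical one carried out in those references, so this is essentially the same approach.
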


\begin{example} 1) Assume that our value group $\Phi$  has rank $h>1$ and let 
$$(0)=\Psi_h\subset \Psi_{h-1}\subset \cdots \subset \Psi_1\subset \Psi_0=\Phi$$ be the nested set of its convex subgroups. Let $(\delta_i)_{i\in T}$ be an infinite well ordered set of elements of $\Phi_{\geq 0}$ without a last element contained in a strict convex subgroup $\Psi_j,\ j\geq 1$. Then the sum $y=\sum_{i\in T}t^{\delta_i}$ is a limit of the pseudo-convergent sequence $(y_\tau=\sum_{i\leq\tau} t^{\delta_i})_{\tau\in T}$ of elements of $k[[t^{\Phi_{\geq 0}}]]$, but if we now take any $t^{\delta}$ such that $\delta\notin \Psi_j$, then $y+z$ is another limit and if we consider the sequence of finite sums $\tilde y_\tau=\sum_{i\leq\tau} (t^{\delta_i} +t^\delta)$, it is still pseudo-convergent and certainly cannot have a (usual) limit in our ring, but it still has $\sum_{i\in T}t^{\delta_i} $ as a limit.\par\noindent
2) The original, and more significant, example is that of the truncations of the roots of an Artin-Schreier equation attached to a prime number $p$, such as $$y^p-x^{p-1}(1+y)=0,$$
in $K[y]$, with $K$ the perfect closure $\bigcup_{n\geq 0}k(x^{\frac{1}{p^n}})$ of $k(x)$, where $k$ is an algebraically closed field of characteristic $p$, which are the  conjugates $\zeta +ax,\ 1\leq a\leq p-1,$ of $$\zeta=x^{1-\frac{1}{p}}+x^{1-\frac{1}{p^2}}+\cdots +x^{1-\frac{1}{p^i}}+\cdots\in k[[x^{(\frac{1}{p^\infty}\Z)_{\geq 0}}]].$$
The exponents form a well ordered set of rational numbers and the truncations form a pseudo-convergent sequence for the $x$-adic valuation of $k[[x^{(\frac{1}{p^\infty}\Z)_{\geq 0}}]]$, which obviously does not converge in the usual sense.\par
In this case the breadth of the pseudo-convergent sequence is the ideal $\{s\in k[[x^{(\frac{1}{p^\infty}\Z)_{\geq 0}}]]\vert \nu_x(s)\geq 1\}$.\par\noindent
This is an example of the method invented by Ostrowski and pursued by Kaplansky: the sequence of truncations is a pseudo convergent sequence of algebraic type of elements of $K$ equipped with the $x$-adic valuation, which does not have a limit in $K$. The $x$-adic valuation of $k(x)$ has a unique extension $\nu_x$ to $K$ and the extension of $\nu_x$ from $K$ to $ K(\zeta)$ described in \cite[Theorem 3]{Ka} is immediate: it changes neither the value group $\frac{1}{p^\infty}\Z$ nor the residue field $k$.
Other interesting examples may be found in \cite[Chap. III]{O}, \cite{R} and \cite[\S 4]{S2}, as well as \cite{C-M-T}.
\begin{lemma}\label{Kap}{\rm (Ostrowski)} Given a pseudo-convergent sequence $(y_\tau)_{\tau\in T}$ with breadth $\Bb$, and a limit $z$ of the sequence, all other limits are of the form $z+z'$ with $z'\in \Bb$.
\end{lemma}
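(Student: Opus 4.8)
The plan is to prove the equivalent and slightly sharper statement: if $z$ and $\tilde z$ are two limits of the sequence $(y_\tau)_{\tau\in T}$, then $\tilde z-z\in\Bb$. The assertion of the lemma then follows at once, since any limit $\tilde z$ can be written $\tilde z=z+(\tilde z-z)$ with $\tilde z-z\in\Bb$; and conversely, if $z'\in\Bb$ then $\nu\big((z+z')-y_\tau\big)\geq\min\big(\nu(z-y_\tau),\nu(z')\big)\geq w_\tau$ shows that $z+z'$ is again a limit, so the description is exact. I would first record the only structural fact needed beyond the ultrametric inequality, namely that the values $w_\tau$ form a strictly increasing family: applying the pseudo-convergence condition $\nu(y_{\tau'}-y_\tau)<\nu(y_{\tau''}-y_{\tau'})$ for $\tau<\tau'<\tau''$ gives precisely $w_\tau<w_{\tau'}$.

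Next I would fix two limits $z$ and $\tilde z$, so that $\nu(z-y_\tau)\geq w_\tau$ and $\nu(\tilde z-y_\tau)\geq w_\tau$ for every $\tau\in T$, and write $\tilde z-z=(\tilde z-y_\tau)-(z-y_\tau)$. Using the ultrametric inequality of the valuation $\nu$ on $k[[t^\Delta]]$ I obtain $\nu(\tilde z-z)\geq\min\big(\nu(\tilde z-y_\tau),\nu(z-y_\tau)\big)\geq w_\tau$, valid for every $\tau$. At this stage I have only the non-strict inequality $\nu(\tilde z-z)\geq w_\tau$, whereas membership in $\Bb$ requires the strict one $\nu(\tilde z-z)>w_\tau$.

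The passage from $\geq$ to $>$ is the one genuinely delicate point, and it is exactly where the hypothesis that $T$ has no last element is used. Given an arbitrary $\tau\in T$, I would choose some $\tau'>\tau$, which exists precisely because $T$ has no last element; applying the inequality already established at the index $\tau'$ gives $\nu(\tilde z-z)\geq w_{\tau'}$, and strict monotonicity $w_{\tau'}>w_\tau$ then yields $\nu(\tilde z-z)>w_\tau$. Since $\tau$ was arbitrary, this shows $\tilde z-z\in\Bb$, completing the argument. The main (and essentially the only) obstacle is to remember to evaluate the bound one step further along the sequence, so that the gap $w_{\tau'}-w_\tau$ automatically upgrades the estimate to the strict inequality defining the breadth.
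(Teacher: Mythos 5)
Your argument is correct. Note, however, that the paper does not actually prove this lemma: its ``proof'' consists of the references \cite[No. 72, p. 386]{O} and \cite[Lemma 3]{Ka}, so your proposal in effect supplies the classical argument contained in those sources. Your route is the standard one: writing $\tilde z-z=(\tilde z-y_\tau)-(z-y_\tau)$ and applying the ultrametric inequality gives $\nu(\tilde z-z)\geq w_\tau$ for every $\tau$, and the two ingredients you isolate --- strict monotonicity $w_\tau<w_{\tau'}$ for $\tau<\tau'$ (whose verification itself needs a third index $\tau''>\tau'$, available because $T$ has no last element) and the trick of evaluating the bound one index further along --- are exactly what upgrades the non-strict inequality to the strict one required by the definition of the breadth $\Bb$; this is the genuinely delicate point and you handle it correctly. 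You in fact prove slightly more than the lemma literally asserts, since you also verify the converse inclusion (every $z+z'$ with $z'\in\Bb$ is again a limit), which matches the full statement of Kaplansky's Lemma 3 that the set of limits is exactly $z+\Bb$; this costs one extra application of the ultrametric inequality and makes the description exact.
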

\begin{proof} See \cite[No. 72, p. 386]{O}, \cite[Lemma 3]{Ka}.
\end{proof}
We note that the exponents of $\zeta$ have an accumulation point in $\R$. The series $\zeta$ belongs to the integral closure $\widetilde{k[[x]]}$ of $k[[x]]$ in $K$. The semigroup of values on $\widetilde{k[[x]]}$ of $\nu_x$ also has accumulation points in $\R$ (see \cite[\S 10]{Te2}) but this does not contradict the results of \cite[Section 3]{C-T} about the absence of accumulation points in semigroups of values of valuations on noetherian local domains because $\widetilde{k[[x]]}$ is not noetherian. \end{example}
\end{section}
\begin{section}{The valuative Cohen Theorem and its applications}\label{secCoh}
 In this section we do not need the residue field to be algebraically closed.\par
Since $R$ is noetherian, the semigroup $\Gamma =\nu(R\setminus \{0\})$ is well ordered and admits a natural minimal system of generators
$$\Gamma=\langle \gamma_1,\ldots ,\gamma_i,\gamma_{i+1}, \ldots\rangle,$$
where $\gamma_{i+1}$ is the smallest element of $\Gamma$ which is not in the semigroup generated by $\gamma_1,\ldots ,\gamma_i$. It is shown in 
\cite[Appendix 3, Proposition 2]{Z-S} that this set of generators is indexed by an ordinal $I \leq \omega^h$ where $h$ is the rank (or height) of the valuation, which is $\leq {\rm dim} R$.\par
Let $(u_i)_{i\in I}$ be a set of variables indexed by the generators $(\gamma_i)_{i\in I}$ of $\Gamma$. It is shown in \cite[\S 4]{Te2} and more precisely in \cite{Te3} that the set of formal sums $\sum d_eu^e$ with $d_e\in k$ and $u^e$ a monomial in the variables $u_i$ is, with the usual multiplication rule for series, a generalized power series $k$-algebra with all the properties of power series rings, except of course noetherianity if $\Gamma$ is not finitely generated. It is regular in any reasonable sense. It is endowed with a weight by giving $u_i$ the weight $\gamma_i$. We denote it by $\widehat{k[(u_i)_{i\in I}]}$. On this local domain the weight determines a valuation $w$; the value of a series is the weight of its terms of lowest weight. This is, by construction, a rational \textit{monomial} valuation. We shall sometimes write $w(m)$ for $w(u^m)$.\par It is proved in \cite[\S 4]{Te2} that $\widehat{k[(u_i)_{i\in I}]}$ is spherically complete with respect to its valuation $w$. The morphism $$\widehat{k[(u_i)_{i\in I}]}\to k[[t^{\Phi_{\geq 0}}]],\ u_i\mapsto t^{\gamma_i}$$ 
is a continuous morphism of spherically complete valued $k$-algebras, whose image is $k[[t^\Gamma]]$.\par\noindent Its kernel is the closure for the $w$-adic topology of the ideal generated by binomials $u^m-u^n$ corresponding to a system of generators of the relations $\sum m_i\gamma_i=\sum n_j\gamma_j$ between the generators $\gamma_i$.\par\noindent
 The continuous surjective morphism $\widehat{k[(u_i)_{i\in I}]}\to k[[t^\Gamma]]$ may be seen as encoding the minimal embedding of the formal space corresponding to $k[[t^\Gamma]]$ in a non singular space.\par
We now recall, from \cite[\S 4]{Te2}, the statement of the valuative Cohen Theorem, which will allow us to approximate our rational valuation by semivaluations with finitely generated semigroup. \par
To a valued local domain $R\subseteq R_\nu$ with valuation ring $R_\nu$, value group $\Phi$ and semigroup $\Gamma\subset \Phi_{\geq 0}$ is associated a graded ring as follows. Define as in \cite[Appendix]{Z} and  \cite[Section 2.1]{Te1} the filtration of $R$ associated to the valuation $\nu$:
$$\Pp_\varphi (R)=\{x\in R/\nu(x)\geq \varphi\} \ {\rm and}\ \Pp_\varphi^+(R)=\{x\in R/\nu(x)>\varphi\},$$
and the associated graded ring  $${\rm gr}_\nu R=\sum_{\varphi\in\Phi}\Pp_\varphi(R)/\Pp_\varphi^+(R)\subseteq \sum_{\varphi\in\Phi}\Pp_\varphi(R_\nu)/\Pp_\varphi^+(R_\nu)={\rm gr}_\nu R_\nu,$$
where the non-zero homogeneous components of ${\rm gr}_\nu R$ must have degree $\varphi\in\Gamma$. We note that $\Pp_0(R)=R$, and $\Pp_0^+(R)=m$ if $m_\nu\cap R=m$.\par
As shown in \cite[\S 4]{Te1}, by the defining property of valuation rings, each homogeneous component of ${\rm gr}_\nu R_\nu$ is a one-dimensional vector space over $k_\nu=R_\nu/m_\nu$. If the valuation is rational, the non-zero homogeneous components of ${\rm gr}_\nu R$ are one-dimensional vector spaces over $k=R/m=k_\nu$ and therefore ${\rm gr}_\nu R$ is a quotient of a polynomial ring $k[(U_i)_{i\in I}]$ in such a way that if we fix a minimal system of homogeneous generators $(\overline\xi_i)_{i\in I}$ of the $k$-algebra  ${\rm gr}_\nu R$, which are in bijection with the generators $(\gamma_i)_{i\in I}$ of $\Gamma$,  the kernel of the surjective morphism
$$k[(U_i)_{i\in I}]\longrightarrow {\rm gr}_\nu R,\ U_i\mapsto\overline\xi_i$$
is generated by binomials $(U^{m^\ell}-\lambda_\ell U^{n^\ell})_{\ell \in L}$. Indeed, ${\rm gr}_\nu R$ is isomorphic as a graded $k$-algebra to the semigroup algebra $k[t^\Gamma]$ of the semigroup $\Gamma$ with coefficients in $k$, which corresponds to the case where all $\lambda_\ell$ are equal to $1$ (see \cite[Section 3.1]{Te3}). The generators $\overline \xi_i$ have degree $\gamma_i$ and each is determined modulo multiplication by an element of $k^*$.\par
The generators $\overline \xi_i$ being fixed, an isomorphism of graded $k$-algebras between ${\rm gr}_\nu R$ and $k[t^\Gamma]$ is necessarily of the form $$\overline\xi_i\mapsto \rho_it^{\gamma_i},$$ 
where the $\rho_i$ are non-zero solutions $u_i=\rho_i$ of the system of binomial equations  $(u^{m^\ell}-\lambda_\ell u^{n^\ell}=0)_{\ell \in L}$. Since $k$ is algebraically closed, one can show by transfinite induction that such solutions exist. If $\Gamma$ is finitely generated, say with $b$ generators, the $\rho_i$ are the coordinates of a point of a $(k^*)^r$ torus orbit in $\A^b(k)$, where $r$ is the rational rank of $\Phi$, which is also the Krull dimension of ${\rm gr}_\nu R$ (see \cite[Proposition 3.1]{Te1}).

\begin{theorem}{\rm (The valuative Cohen Theorem)}\label{Cohen}\par\noindent
1) Let $R$ be a complete equicharacteristic noetherian local domain and let $\nu$ be a rational valuation of $R$. We fix a field of representatives $k\subset R$ of the residue field $R/m$ and a minimal system of homogeneous generators $(\overline\xi_i)_{i\in I}$ of the graded $k$-algebra ${\rm gr}_\nu R$. There exist choices of representatives $\xi_i\in R$ of the $\overline\xi_i$ such that the map $u_i\mapsto \xi_i$ determines a surjective morphism of $k$-algebras 
 $$\pi\colon \widehat{k[(u_i)_{i\in I}]}\longrightarrow R$$
 which is continuous with respect to the topologies associated to the filtrations by weight and by valuation respectively. The associated graded morphism with respect to these filtrations is the morphism
 $${\rm gr}_w\pi\colon k[(U_i)_{i\in I}]\longrightarrow {\rm gr}_\nu R,\ \ U_i\mapsto\overline\xi_i$$
whose kernel is a prime ideal generated by binomials $(U^{m^\ell}-\lambda_\ell U^{n^\ell})_{\ell\in L}$, with $\lambda_\ell\in k^*$.\par\noindent
If the semigroup $\Gamma=\nu(R\setminus\{0\})$ is finitely generated or if the valuation $\nu$ is of rank one, one may take any system of representatives $(\xi_i)_{i\in I}$. 
\par\smallskip\noindent
2) The kernel $F$ of $\pi$ is the closure of the ideal generated by elements $$F_\ell=u^{m^\ell}-\lambda_\ell u^{n^\ell}+\sum_{w(m)>w(m^\ell)}c_mu^m,\ c_m\in  k$$ as $U^{m^\ell}-\lambda_\ell U^{n^\ell}$ runs through a set $L$ of generators of the kernel of ${\rm gr}_w\pi$. In a slightly generalized sense, these generators form a standard basis of the kernel of $\pi$, with respect to the weight $w$.\par\smallskip\noindent
3) One can choose a minimal system of binomial generators for the kernel of ${\rm gr}_w\pi$ and the $F_\ell$ may be chosen so that each of them involves only finitely many of the $u_i$.\par\smallskip\noindent4) Denoting by $J\subset I$ a set of variables $(u_i)_{i\in J}$ minimally generating the maximal ideal of $R$, the kernel of $\pi$ is the closure of the ideal generated by some of the series $(F_\ell)_{\ell\in L}$ which we write as $$F_i=u^{m^i}-\lambda_iu^{n^i}+\sum c_m^{(i)}u^m-u_i,\ {\rm for}\ i\in I\setminus J,\ {\rm where}\ w(m^i)<w(m)<\gamma_i,$$
and finitely many series
 $$F_q=u^{m^q}-\lambda_qu^{n^q}+\sum c_m^{(q)}u^m, \ {\rm with}\ w(m)>w(m^q),\ q=1,\ldots ,f.$$
 
\end{theorem}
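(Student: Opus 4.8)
The plan is to prove this valuative Cohen theorem by transporting the classical Cohen structure argument into the category of spherically complete valued $k$-algebras, tracking the weight $w$ and the valuation $\nu$ at every stage. First I would fix the graded picture. Since ${\rm gr}_\nu R\cong k[t^\Gamma]$ as graded $k$-algebras, the chosen homogeneous generators $\overline{\xi}_i$ give a surjection ${\rm gr}_w\pi\colon k[(U_i)_{i\in I}]\to {\rm gr}_\nu R$, $U_i\mapsto\overline{\xi}_i$. As the target is a domain, its kernel is prime; it is the binomial (lattice) ideal presenting $k[t^\Gamma]$, generated by the $U^{m^\ell}-\lambda_\ell U^{n^\ell}$ attached to a generating set of the relations $\sum m_i\gamma_i=\sum n_j\gamma_j$ among the generators, with $\lambda_\ell\in k^*$ absorbing the chosen isomorphism ${\rm gr}_\nu R\cong k[t^\Gamma]$. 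I would then lift, choosing representatives $\xi_i\in\Pp_{\gamma_i}(R)$ of the $\overline{\xi}_i$, so that $\nu(\xi_i)=\gamma_i$ and, because ${\rm gr}_\nu R$ is a domain, $\nu(\xi^e)=w(u^e)$ for every monomial.

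Next I would define $\pi(\sum d_e u^e)=\sum d_e\xi^e$ and show it is a well-defined continuous $k$-algebra morphism $\widehat{k[(u_i)_{i\in I}]}\to R$. Continuity for the weight and valuation filtrations is immediate, since $\nu(\pi(u^e))\geq w(u^e)$ forces $\nu(\pi(s))\geq w(s)$. The substantive point is convergence in $R$: grouping the terms of $s$ by weight produces, for each value $\varphi$ in the well ordered support, a homogeneous piece mapping into $\Pp_\varphi(R)$, and the partial sums indexed by increasing weight form a pseudo-convergent family for $\nu$; since $R$ is only $m$-adically complete, one must upgrade this to an honest $m$-adic limit. This is where the representatives have to be chosen with care, so that monomials of higher weight acquire higher $m$-adic order. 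I would secure this together with the kernel computation below, arranging the $\xi_i$ with $i\notin J$ to be given by the expressions $F_i$ of part 4, which write $u_i$ through monomials of strictly lower weight in the finitely many variables $(u_j)_{j\in J}$ generating $m$; this confines the genuine infinitude to finitely many $\xi_j$ and reduces convergence to ordinary $m$-adic convergence in the complete ring $R$.

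To prove $\pi$ surjective and to compute its kernel I would run a transfinite division and approximation argument, which is the engine of the whole theorem. Given $x\in R$ with $\nu(x)=\varphi$, the initial form ${\rm in}_\nu x\in{\rm gr}_\nu R$ is hit by a weight-$\varphi$ polynomial in the $\overline{\xi}_i$; lifting it gives $P$ with $\nu(x-\pi(P))>\varphi$, and iterating over the well ordered value set, then collecting the successive corrections by the spherical completeness of $\widehat{k[(u_i)_{i\in I}]}$, produces a preimage, so $\pi$ is onto. The same mechanism lifts each binomial generator $U^{m^\ell}-\lambda_\ell U^{n^\ell}$ of $\ker{\rm gr}_w\pi$ to an element $F_\ell=u^{m^\ell}-\lambda_\ell u^{n^\ell}+\sum_{w(m)>w(m^\ell)}c_m u^m$ of $\ker\pi$, since $\pi(u^{m^\ell}-\lambda_\ell u^{n^\ell})$ has value $>w(m^\ell)$ and is corrected away by higher-weight terms. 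That the $F_\ell$ form a \emph{standard basis}, i.e. that $\ker\pi$ is the $w$-adic closure of the ideal they generate, follows because ${\rm in}_w F_\ell=U^{m^\ell}-\lambda_\ell U^{n^\ell}$ generate ${\rm in}_w(\ker\pi)=\ker{\rm gr}_w\pi$: any $g\in\ker\pi$ reduces, by transfinite weighted division against the $F_\ell$, to elements of arbitrarily high value, and the quotients assemble in the spherically complete source. Choosing a minimal binomial generating set, each relation involves only finitely many $\gamma_i$, which is part 3; separating the relations of the form $u_i=(\text{lower weight})$ for $i\notin J$ from the finitely many remaining relations among the generators of $m$ yields the two families $F_i$ and $F_q$ of part 4, the finiteness of the $F_q$ coming from the noetherianity of $R$.

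Finally, for the claim that any representatives work when $\Gamma$ is finitely generated or $\nu$ has rank one: in the finitely generated case $\widehat{k[(u_i)_{i\in I}]}$ is a genuine power series ring in finitely many variables and the classical Cohen theorem applies directly, so no adaptive choice is needed; in the rank one case $\Phi\hookrightarrow\R$ and $I\leq\omega$, so the recursion has length $\omega$ and, the weights $\gamma_i$ tending to $+\infty$ without accumulation, the successive corrections can be summed in the $m$-adically complete ring $R$ regardless of the lifts. I expect the main obstacle to be exactly the well-definedness of $\pi$ in the general higher-rank, non-finitely-generated case: there a series in the $u_i$ may have terms of unbounded weight but bounded $m$-adic order, so $\sum d_e\xi^e$ need not converge in $R$ for arbitrary representatives, and one is forced to choose the $\xi_i$ adaptively, equivalently to build $\pi$ and the standard basis together by a single transfinite recursion governed by the spherical completeness of the source. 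Controlling this recursion, and checking that the resulting kernel is exactly the stated closure and not something larger, is the technical heart of the argument.
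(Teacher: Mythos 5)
Your skeleton matches the actual architecture of the proof (graded binomial presentation of ${\rm gr}_w\pi$, lifting of generators, continuity, transfinite approximation for surjectivity and for the standard basis), and you correctly locate the crux in the well-definedness of $\pi$ on infinite series. But at the two points where the theorem has real content your text offers a diagnosis rather than an argument. For rank one you claim the corrections ``can be summed in the $m$-adically complete ring $R$ regardless of the lifts'' because the values tend to $+\infty$. The implication you need --- that large $\nu$-value forces large $m$-adic order, i.e.\ that the filtration $(\Pp_\varphi(R))_{\varphi}$ is cofinal with the $m$-adic filtration --- is not formal: it is precisely Chevalley's theorem on filtrations of complete noetherian local rings (see \cite[Chap. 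IV, \S 2, No. 5, Cor. 4]{B}), which is exactly the ingredient the paper points to for the rank one case. Completeness is essential here: in the non-complete ring $k[x,y]_{(x,y)}$, with $\nu$ the rank one valuation measuring order of contact with a transcendental series $y(x)=\sum_n a_nx^n$, the elements $y-\sum_{n\leq N}a_nx^n$ have arbitrarily large value but $m$-adic order $1$, so ``values $\to\infty$'' does not by itself give $m$-adic summability. Since every convergence step, the surjectivity recursion, and the weighted division against the $F_\ell$ in your plan rest on this cofinality, omitting it is a genuine gap, not a detail.

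More seriously, the case where the theorem is hardest --- higher rank with $\Gamma$ not finitely generated, which is exactly where part 1) says the representatives must be chosen and cannot be arbitrary --- is left by you as ``the technical heart'', to be handled by an unspecified adaptive transfinite recursion. This is where the proof referred to in the paper (carried out in \cite[\S 4]{Te2} and \cite{Te3}) does something for which you have no substitute: it settles rank one via Chevalley and then proceeds by \emph{induction on the rank} of $\nu$, using the decomposition of a valuation of rank $h$ as a rank one valuation composed with a valuation of smaller rank. In higher rank the cofinality above genuinely fails (the ideals $\Pp_\varphi(R)$ for $\varphi$ ranging in a proper convex subgroup of $\Phi$ have nonzero intersection), so no direct repetition of the rank one argument can succeed, and your proposal supplies no mechanism to replace it. Two smaller inaccuracies: in part 3 the claim is that the full series $F_\ell$, tail included, can be chosen to involve only finitely many $u_i$ --- that each initial binomial involves finitely many variables is automatic and is not what must be proved; and taking the relations $F_i$ of part 4 as the \emph{definition} of the lifts $\xi_i$, $i\in I\setminus J$, is circular as stated, since the existence of such expressions with the stated weight inequalities is part of the conclusion of the theorem.
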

\noindent This is proved in \cite[\S 4]{Te2} and \cite{Te3}. For rank one valuations the statement is a consequence of Chevalley's Theorem on filtrations of complete noetherian local rings (see \cite[Chap. IV, \S 2, No. 5, Cor. 4]{B}) and the general case is obtained by induction on the rank. \par 
Intuitively, the $F_i$ are necessary equations because in the ring $R$ the $(\xi_i)_{i\in I\setminus J}$ are series in the $(\xi_i)_{i\in  J}$, and the $F_q$ become, after elimination of the variables $(\xi_i)_{i\in I\setminus J}$ through the equations $F_i$, the equations defining the ring $R$ in a formal power series ring $k[[(u_i)_{i\in  J}]]$ in accordance with the usual Cohen Theorem (see \cite{Co}). The series $(F_\ell)_{\ell\in L}$ then constitute a (generalized) standard basis for the ideal topologically generated by the $F_i,F_q$. \par\medskip 
\end{section}
\begin{section}{\rm Torific embeddings for rational valuations of rank one with finitely generated semigroup}
In this section we assume that $\nu$ is a rational valuation of rank one on the complete equicharacteristic noetherian local domain $R$, that the residue field $k=k_\nu$ is algebraically closed and that the semigroup $\Gamma$ is finitely generated, say $\Gamma=\langle\gamma_1,\ldots ,\gamma_b\rangle$. By \cite[beginning of \S 7]{Te2}, this implies that the valuation $\nu$ is Abhyankar (in our case this means that ${\rm dim}{\rm gr}_\nu R={\rm dim}R$) and in particular its value group is $\Z^{{\rm dim}R}$. We set $r={\rm dim}R$ and $B =\{1,\ldots ,b\}$. The kernel of the map $k[(U_i)_{i\in B}]\to {\rm gr}_\nu R$ is finitely generated so the set $L$ is finite.\par\noindent We fix an ordered embedding of the value group $\Phi$ in $\R$. \par
According to Theorem \ref{Cohen} there are elements $(\xi_i)_{i\in B}$ in the maximal ideal of $R$ whose initial forms $(\overline\xi_i)_{i\in B}$ constitute a minimal set of generators of the $k$-algebra ${\rm gr}_\nu R$ and such that the morphism $$  \pi\colon k[[(u_i)_{i\in B}]]\longrightarrow R,\ u_i\mapsto\xi_i\leqno{(*)}$$ is a continuous surjective morphism of complete noetherian $k$-algebras whose kernel is generated by series $F_\ell$ as in $2)$ of Theorem \ref{Cohen}. \par\smallskip
The embedding of the algebroid space corresponding to $R$ into $\A^b(k)$ corresponding to this morphism is called a \textit{torific embedding}.\par\smallskip
Given a surjective morphism of rings $g\colon S\to R$ and a filtration $\Pp=(\Pp_\varphi)_{\varphi\in \Delta}$ of $S$ by ideals, the image filtration in $R$ is the filtration by the ideals $g(\Pp_\varphi)$. The order of an element of $R$ with respect to $g(\Pp)$ is the maximum of the orders with respect to $\Pp$ of its preimages in $S$. This maximum exists in the framework we are in. See \cite[Corollary 3.4, Proof of a)]{Te2}.\par  The image by the morphism $(*)$ of the filtration of $k[[(u_i)_{i\in B}]]$ by weight is the $\nu$-filtration of $R$. Since the filtration by weight can also be considered as associated to the monomial valuation $w$ by smallest weight on $k[[(u_i)_{i\in B}]]$, we may say that \textit{The valuation $\nu$ is the image by $\pi$ of the monomial weight valuation of $k[[(u_i)_{i\in B}]]$}.
\par\smallskip\noindent
\textbf{Recall} that :\par\noindent
It is proved in \cite[Theorem 11]{KKMS}, \cite[Chap. V, \S 6]{Ewald} and \cite[Chapter 11]{C-L-S} that we have:
\begin{proposition}\label{cones}  Given a finite collection of rational convex cones in $\check\R^b$ contained in a rational convex cone $C$, there are regular fans $\Sigma$ with support $C$ which are compatible with all the cones of the collection in the sense that the intersection of any cone $\sigma$ of $\Sigma$ with a cone of the collection is a face of $\sigma$.
\end{proposition}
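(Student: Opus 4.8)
The plan is to obtain the regular compatible fan in two movements: first to build \emph{some} fan with support $C$ compatible with the given collection, and then to refine it to a regular one without losing compatibility. Write the collection as $C_1,\dots,C_s\subseteq C$. Each $C_j$, being a rational convex polyhedral cone, is the intersection of finitely many rational closed half-spaces; I would gather all the linear functionals defining the $C_j$, together with those bounding $C$ itself, into a single finite set and consider the arrangement of the corresponding rational hyperplanes in $\check\R^b$. On the relative interior of each cone of this arrangement every functional has a constant sign (possibly zero), so membership in each $C_j$ is constant there. Since the bounding forms of $C$ are among those used, the closures of the cells contained in $C$, together with their faces, form a rational fan $\Sigma_0$ with support $C$ in which each $C_j$ is a union of cones. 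For $\sigma\in\Sigma_0$ the intersection $\sigma\cap C_j$ is obtained by imposing on $\sigma$ only those defining inequalities of $C_j$ that are not already satisfied on $\sigma$, each such inequality cutting out a supporting hyperplane of $\sigma$; hence $\sigma\cap C_j$ is a face of $\sigma$ and $\Sigma_0$ is compatible with the collection.

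Next I would observe that compatibility is inherited by any refinement, which reduces the whole problem to resolving $\Sigma_0$. Indeed, if $\Sigma$ refines $\Sigma_0$ and $\tau\in\Sigma$ is contained in the minimal $\sigma\in\Sigma_0$ containing it, then $\tau\cap C_j=\tau\cap(\sigma\cap C_j)$; since $\sigma\cap C_j$ is a face $\sigma\cap H$ for a hyperplane $H$ supporting $\sigma$, and the inclusion $\tau\subseteq\sigma$ forces $H$ to support $\tau$ as well, we obtain $\tau\cap C_j=\tau\cap H$, a face of $\tau$. Thus it suffices to refine $\Sigma_0$ to a \emph{regular} fan with the same support.

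This last step is the toric resolution of singularities. First I would refine $\Sigma_0$ to a simplicial fan by a sequence of star subdivisions at suitable interior rays, which does not alter the support. To each simplicial cone $\sigma$ with primitive generators $v_1,\dots,v_d$ I attach its multiplicity, the index of $\Z v_1+\cdots+\Z v_d$ in the lattice of the linear span of $\sigma$, and recall that $\sigma$ is regular exactly when this index is $1$. If it exceeds $1$, the half-open fundamental parallelepiped contains a nonzero lattice point $v=\sum_i t_iv_i$ with $0\le t_i<1$; star-subdividing at the primitive generator $v_0$ of its ray replaces $\sigma$ by simplicial cones of strictly smaller multiplicity, because multiplicity equals the normalized volume of the cone and is additive under subdivision, so that introducing the interior ray $v_0$ splits $\sigma$ into at least two cones of strictly smaller volume. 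As a star subdivision at an interior ray only modifies the cones containing that ray and never increases any multiplicity, one may repeatedly subdivide a cone realizing the maximal multiplicity $M$; this either lowers $M$ or decreases the number of cones attaining it, so the pair (maximal multiplicity, number of cones attaining it) strictly decreases and the procedure terminates at a regular fan $\Sigma$ with support $C$, which by the previous paragraph is compatible with the collection.

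The genuinely delicate point is this resolution step rather than the combinatorial subdivision $\Sigma_0$: one must verify both that each star subdivision strictly reduces the multiplicities of the cones it touches (the multiplicity-reduction lemma) and that the local choices assemble into honest fans at every stage, which is what makes star subdivisions, as opposed to arbitrary ones, the right tool. The termination, which turns an a priori unbounded procedure into a finite one, rests entirely on the decrease of the integer-valued multiplicity, and this is where I expect the main work to lie.
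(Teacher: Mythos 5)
The paper offers no proof of this proposition at all: it is recalled as a known result, with citations to KKMS (Theorem 11), Ewald (Chap.~V, \S 6) and Cox--Little--Schenck (Chapter 11). So your argument must be measured against those references rather than against an internal proof, and its architecture --- (i) the common refinement built from the hyperplane arrangement of all functionals defining the $C_j$ and $C$, giving a fan with support $C$ in which each $C_j$ meets every cone in a face, (ii) the remark that compatibility is inherited by arbitrary refinements, (iii) toric resolution by simplicialization followed by star subdivisions with a multiplicity descent --- is precisely the standard one carried out there. Steps (i) and (ii), as you write them, are correct, and you have correctly located the delicate point in step (iii).

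However, your stated justification of the descent is false: multiplicity is \emph{not} additive under star subdivision. For $\sigma=\langle e_1,e_2,u\rangle\subset\R^3$ with $u=e_1+e_2+2e_3$ one has $\operatorname{mult}(\sigma)=2$, the lattice point $v=(1,1,1)=\tfrac{1}{2}e_1+\tfrac{1}{2}e_2+\tfrac{1}{2}u$ lies in the half-open fundamental parallelepiped, and the star subdivision at $v$ produces the three cones $\langle e_1,e_2,v\rangle$, $\langle e_1,u,v\rangle$, $\langle e_2,u,v\rangle$, each of multiplicity $1$: their sum is $3\neq 2$. So ``additivity of normalized volume'' cannot be what makes the pieces smaller. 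The correct reason is multilinearity of the determinant: if $v_0=\sum_i t_iv_i$ with $0\le t_i<1$, the new maximal cone $\langle v_1,\dots,\widehat{v_i},\dots,v_d,v_0\rangle$ has multiplicity $\vert\det(v_1,\dots,v_0,\dots,v_d)\vert=t_i\,\operatorname{mult}(\sigma)<\operatorname{mult}(\sigma)$ (consistent with the example: $t_i=\tfrac{1}{2}$ and $\tfrac{1}{2}\cdot 2=1$). The same computation repairs your other unproved assertion, that the subdivision ``never increases any multiplicity'' of the other cones it touches: since $v$ lies in the relative interior of the face of $\sigma$ spanned by the $v_i$ with $t_i>0$, and the fan is simplicial, that face is a face of every cone $\tau$ containing $v$, so $v$ lies in the fundamental parallelepiped of each such $\tau$ as well, and $\tau$ is likewise replaced by cones of strictly smaller multiplicity. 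This is exactly what makes your lexicographic pair (maximal multiplicity, number of cones attaining it) decrease and the algorithm terminate. With the determinant argument substituted for the false additivity claim, your proof coincides with the one in the cited references.
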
\noindent
 It is shown in \cite[Proposition 3.3, b)]{Te2} that we have:
\begin{proposition}\label{Resol} For a fixed torific embedding and a choice of generators $(F_\ell)_{\ell\in L}$ of the kernel of the morphism $(*)$ there exist finitely many rational strictly convex cones $(C_\ell)_{\ell\in L}$ in the space $\check \R_{\geq 0}^b$ such that any regular fan $\Sigma$ with support $\check\R_{\geq 0}^b$ which is compatible with those cones and with the traces in $\check \R_{\geq 0}^b$ of the hyperplanes $(H_\ell)_{\ell\in L}$ dual to the vectors $(m^\ell-n^\ell)_{\ell\in L}$ has the following properties:\par\noindent
\begin{enumerate}
\item The fan $\Sigma$ contains a unique $r$-dimensional rational convex cone $\sigma_\w=\langle a^1,\ldots ,a^r\rangle$ generated by primitive integral vectors $a^1,\ldots ,a^r$ and containing  the {\rm weight vector} $\w=(\gamma_1,\ldots ,\gamma_b)\in \check\R^b$ in its relative interior.
\item In any chart $Z(\sigma)\subset Z(\Sigma)$ corresponding to a cone $\sigma=\langle a^1,\ldots ,a^b\rangle$ of maximal dimension containing $\sigma_\w$ the vectors $a^1,\ldots ,a^r$ generating the convex cone $\sigma_\w$ correspond naturally to coordinates $y_1,\ldots ,y_r$.
\item The birational toric morphism $\varpi(\Sigma)\colon Z(\Sigma)\to \A^b(k)$ is such that the strict transform $\tilde\Cc$ of the toric subvariety $\Cc$ of $\A^b(k)$ defined by the binomials $(u^{m^\ell}-\lambda_\ell u^{n^\ell})_{\ell\in L}$ is non singular.
\item In a chart $Z(\sigma)$ as above the center of the valuation $w$ on the birational model $\tilde\Cc$ of $\Cc$ is the point defined by $$y_1=\ldots =y_r=0, y_{r+1}=c_{r+1},\ldots ,y_b=c_b\ \ {\rm with}\ c_j\in k^*$$ and $y_1,\ldots ,y_r$ are local coordinates on $\tilde\Cc$ with rationally independent values.
\item The coordinates $y_1,\ldots ,y_b$ are Laurent monomials in the coordinates $u_i$, say $y_j=u^{\alpha_j}$ and the ring $R_1=R[\xi^{\alpha_1},\ldots, \xi^{\alpha_b}]$ is a subring of the valuation ring $R_\nu$. The local ring $\tilde R=(R_1)_{m_\nu\cap R_1}$ is called the algebraic transform of $R$ by the toric modification $\varpi(\Sigma)$.
\item  The local ring $\tilde R$ is regular and its maximal ideal is generated by the images of $y_1,\ldots ,y_r$, namely the images of $\xi^{\alpha_1},\ldots, \xi^{\alpha_r}$ in the localization of $R_1$ by $m_\nu\cap R$.
\item The values of $\xi^{\alpha_1},\ldots, \xi^{\alpha_r}$ are rationally independent so that the valuation $\nu$ on $\tilde R$ is induced by the monomial valuation on its completion $k[[y_1,\ldots ,y_r]]$ determined by the values of the $y_i$, whose semigroup is $\N^r$. These two valuations induce the valuation $\nu$ on $R$.
\end{enumerate}
\end{proposition}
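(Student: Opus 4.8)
The plan is to manufacture the cones $C_\ell$ from a regular subdivision of the cone of $\Cc$, feed them together with the hyperplanes $H_\ell$ into Proposition~\ref{cones} to obtain a compatible regular fan $\Sigma$, and then verify (1)--(7) chart by chart, the last items being a transfer from the toric special fibre to $R$ itself. First I would fix the linear-algebra frame in $\check\R^b$. The binomials $u^{m^\ell}-\lambda_\ell u^{n^\ell}$ encode the relations $\sum_i(m^\ell_i-n^\ell_i)\gamma_i=0$, so the weight vector $\w=(\gamma_1,\dots,\gamma_b)$ lies on each hyperplane $H_\ell=\{a\mid\langle a,m^\ell-n^\ell\rangle=0\}$ and hence in $W:=\bigcap_\ell H_\ell$. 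Since the $(m^\ell-n^\ell)$ generate the full lattice of relations and $\Gamma$ has rational rank $r$, we get $\dim W=r$, and $\w$ is \emph{generic} in $W$: it lies on no further rational linear subspace. I would then take the $C_\ell$ to be the maximal cones of a regular subdivision, for the lattice $W\cap\Z^b$, of the trace $C_\Cc:=W\cap\check\R^b_{\geq0}$; this is exactly an intrinsic toric resolution of $\Cc\cong\mathrm{Spec}\,k[t^\Gamma]$. Proposition~\ref{cones} now yields a regular fan $\Sigma$ with support $\check\R^b_{\geq0}$ compatible with the $C_\ell$ and with the traces of the $H_\ell$.

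The combinatorial properties (1), (2) and the monomial part of (5) are then formal. Compatibility with the traces of the $H_\ell$ makes $\Sigma$ compatible with $W$, so the unique cone $\sigma_\w$ with $\w\in\mathrm{relint}(\sigma_\w)$ satisfies $\sigma_\w\cap W=\sigma_\w$, i.e.\ $\sigma_\w\subset W$ and $\dim\sigma_\w\leq r$; genericity of $\w$ forces $\dim\sigma_\w=r$, giving (1). Regularity of $\Sigma$ lets me extend the primitive generators $a^1,\dots,a^r$ of $\sigma_\w$ to a lattice basis $a^1,\dots,a^b$ of a maximal cone $\sigma\supseteq\sigma_\w$; the dual basis produces the Laurent-monomial coordinates $y_j=u^{\alpha_j}$ on $Z(\sigma)\cong\A^b(k)$, with $y_1,\dots,y_r$ attached to $a^1,\dots,a^r$. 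This is (2) and the monomial assertion of (5).

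For (3), (4), (7) I would compute $w(y_j)=\langle\w,\alpha_j\rangle$. As $\w=\sum_{i\le r}c_i a^i$ with $c_i>0$, the values $w(y_1),\dots,w(y_r)$ equal $c_1,\dots,c_r$, which are $\Q$-independent by genericity, while $w(y_{r+1})=\dots=w(y_b)=0$. Hence the centre of $w$ sits at $y_1=\dots=y_r=0,\ y_j=c_j\in k^*$ and the values are rationally independent, giving (4) and (7). That $\tilde\Cc$ is smooth there (and over all of $C_\Cc$) is where compatibility with the $C_\ell$ enters: it forces the trace subdivision of $C_\Cc$ to be regular for the lattice of $\Cc$, so that on each chart $y_1,\dots,y_r$ restrict to a regular system of parameters of $\tilde\Cc$; this is (3) and the coordinate clause of (4).

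Finally I would transfer to $R$. Each $y_j$ with $w(y_j)=0$ gives $\xi^{\alpha_j}$ of $\nu$-value $0$, a unit of $R_\nu$, so $R_1=R[\xi^{\alpha_1},\dots,\xi^{\alpha_b}]\subset R_\nu$ and $\tilde R=(R_1)_{m_\nu\cap R_1}$ is well defined, the rest of (5). The decisive point is that $R$ is a constant-semigroup deformation of $\mathrm{gr}_\nu R\cong k[t^\Gamma]$, the coordinate ring of $\Cc$, and that $\varpi(\Sigma)$ respects the $\nu$-filtration; thus $\mathrm{gr}_\nu\tilde R$ is the coordinate ring at the centre of the smooth chart of $\tilde\Cc$, namely $k[y_1,\dots,y_r]$. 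Since a local domain whose associated graded ring for a rank-one valuation is a polynomial ring is regular, with a regular system of parameters lifting the generators, this yields (6), and the monomial form of $\nu$ on the completion $k[[y_1,\dots,y_r]]$ in (7). I expect this last transfer to be the main obstacle: the combinatorics only resolves the toric special fibre $\mathrm{Spec}\,\mathrm{gr}_\nu R$, and the real work is to propagate smoothness and the monomialization of $\nu$ from $\mathrm{gr}_\nu\tilde R$ up to $\tilde R$. This rests on the specialization and faithful-flatness properties of the $\nu$-adic filtration together with the Abhyankar hypothesis, which keeps $\dim\tilde R=r=\dim\mathrm{gr}_\nu\tilde R$ and prevents any drop along the deformation.
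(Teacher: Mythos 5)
The paper does not prove Proposition~\ref{Resol} at all: it is quoted from \cite[Proposition 3.3, b)]{Te2}, so your attempt must be measured against the proof there. Your combinatorial skeleton is sound and matches that source on the toric side: compatibility with the traces of the $H_\ell$ forces $\sigma_\w\subset W$, genericity of $\w$ (the coordinates of $\w$ generate a group of rational rank $r$, cf.\ Lemma~\ref{weighin}) forces $\dim\sigma_\w=r$, and extending $a^1,\ldots,a^r$ to a lattice basis gives the monomial coordinates $y_j=u^{\alpha_j}$ and the value computation $w(y_{r+1})=\cdots=w(y_b)=0$. So items (1), (2) and the monomial part of (5) are fine.

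The genuine gap is in your construction of the cones $(C_\ell)_{\ell\in L}$, and it propagates to items (3), (4), (6), (7). These cones are indexed by the generators $F_\ell$ because they must be derived from the exponents of the \emph{tails} $\sum_{w(m)>w(m^\ell)}c_mu^m$ of the series $F_\ell$ --- this is exactly how the present paper uses the proposition in Section~\ref{PSTI}: the fan ``must be compatible not only with the hyperplanes $H_\ell$ but also with the rational convex cones derived from the exponents appearing in the generators of the kernel.'' Your $C_\ell$ see only the binomials (the lattice $W\cap\Z^b$), so nothing in your setup constrains how the non-initial terms of the $F_\ell$ transform. Concretely: a tail monomial $u^m$ becomes $y^{\langle a^\cdot,m\rangle}$ in the chart $Z(\sigma)$, and if $\langle a^j,m-m^\ell\rangle<0$ for some generator $a^j$ of $\sigma$ with $w(y_j)=0$ (which the overweight inequality $\langle\w,m-m^\ell\rangle>0$ does not exclude, since $\w$ pairs to zero with nothing outside $\sigma_\w$'s span), then after factoring out the largest common monomial the binomial no longer gives the initial part of the transformed equation at the claimed center; the location of the center, the regularity of $\tilde R$, and the monomiality of $\nu$ all become unjustified. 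This is precisely why your ``decisive point'' --- that ${\rm gr}_\nu\tilde R\simeq k[y_1,\ldots,y_r]$ --- cannot be obtained from ``faithful flatness'' or ``specialization'' of the $\nu$-filtration: it must be proved by choosing the $C_\ell$ from the tail exponents (using well-orderedness/noetherianity to cut the infinitely many conditions $\langle a,m-m^\ell\rangle\geq 0$ down to finitely many defining inequalities) so that on \emph{any} compatible regular fan each transformed $F_\ell$ is a monomial times (strict transform of its initial binomial plus terms vanishing at the center), i.e.\ so that $\tilde R$ is again an overweight deformation, now of the smooth $\tilde\Cc$; regularity of $\tilde R$ then follows from the criterion that a local domain with a rational valuation whose graded ring is a polynomial ring is regular. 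A secondary gap of the same nature affects item (3): smoothness of the strict transform of the \emph{non-normal} toric variety $\Cc$ is itself a theorem (the embedded toric resolution of Gonz\'alez P\'erez--Teissier recalled in \cite[\S 3]{Te2}) requiring its own compatibility cones; your argument identifies $\tilde\Cc$ with the normal toric variety of the trace fan, which is exactly what has to be proved.
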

\begin{definition}\label{mizer}The datum of a torific embedding and a regular fan $\Sigma$ with support $\check\R^b_{\geq 0}$ satisfying the conditions of Proposition \ref{Resol}
 above will be called a \textit{torific uniformizer} for $\nu$.
 \end{definition} 
 It follows from what we have just recalled that torific uniformizers exist for any rational valuation with finitely generated semigroup on a complete equicharacteristic noetherian local domain with an algebraically closed residue field.

 Let  $\Sigma$ be a regular fan with support the first quadrant $\check \R^b_{\geq 0}$ and compatible with the hyperplanes $(H_\ell)_{\ell\in L}$ dual to the vectors $m^\ell-n^\ell$ and the cones $(C_\ell )_{\ell\in L}$, and let $\sigma_\w$ be an $r$-dimensional cone of $\Sigma$ containing the weight vector $\w=(w(u_1),\dots ,w(u_b))$.
 \begin{lemma}\label{weighin}The cone $\sigma_\w$ of $\Sigma$ is uniquely determined and contained in the \textit{weight cone} $W=(\bigcap_{\ell\in L}H_\ell) \bigcap\check \R^b_{\geq 0}$. 
 \end{lemma}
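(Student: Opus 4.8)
The plan is to deduce everything from a single observation: the weight vector $\w$ itself lies in $W$, in fact in its relative interior. First I would record that each binomial $U^{m^\ell}-\lambda_\ell U^{n^\ell}$ in the chosen generating set of the kernel of ${\rm gr}_w\pi$ is homogeneous for the weight grading in which $U_i$ has degree $\gamma_i$; this is exactly what it means for it to be a homogeneous generator of the kernel of the graded morphism ${\rm gr}_w\pi$. Homogeneity reads $\sum_i m^\ell_i\gamma_i=\sum_i n^\ell_i\gamma_i$, that is $\langle \w, m^\ell-n^\ell\rangle=0$, so $\w\in H_\ell$ for every $\ell\in L$. Since the $\gamma_i$ are values of elements of the maximal ideal they are strictly positive, so $\w$ lies in the open positive orthant. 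Hence $\w\in(\bigcap_{\ell\in L}H_\ell)\cap\check\R^b_{\geq 0}=W$, and more precisely $\w$ lies in the relative interior of $W$, namely the intersection of the linear subspace $\bigcap_\ell H_\ell$ with the open orthant. A dimension count confirms $\dim W=r$: because $\Gamma$ generates the value group $\Phi=\Z^r$, the map $\Z^b\to\Phi,\ e_i\mapsto\gamma_i$, is surjective, so the relation lattice generated by the $m^\ell-n^\ell$ has rank $b-r$ and $\bigcap_\ell H_\ell$ has dimension $r$.

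For the uniqueness I would invoke the basic property of a fan that the relative interiors of its cones partition its support. Since $\w\in\check\R^b_{\geq 0}=|\Sigma|$, there is exactly one cone of $\Sigma$ whose relative interior contains $\w$; this is the cone $\sigma_\w$, and it coincides with the $r$-dimensional cone produced in Proposition \ref{Resol}. This gives the asserted unique determination.

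For the containment $\sigma_\w\subseteq W$ I would use compatibility. Fix $\ell\in L$. Compatibility of $\Sigma$ with the trace of $H_\ell$ means that $\sigma_\w\cap H_\ell$ is a face of $\sigma_\w$. But $\w\in\sigma_\w\cap H_\ell$, and $\w$ lies in the relative interior of $\sigma_\w$, while a relative interior point of a cone lies in no proper face; therefore $\sigma_\w\cap H_\ell=\sigma_\w$, i.e. $\sigma_\w\subseteq H_\ell$. As this holds for every $\ell\in L$ and $\sigma_\w\subseteq\check\R^b_{\geq 0}$, we conclude $\sigma_\w\subseteq W$. Together with the $r$-dimensionality of $\sigma_\w$ recalled in Proposition \ref{Resol} and $\dim W=r$, this even shows that $\sigma_\w$ is a maximal-dimensional cone of $\Sigma$ sitting inside the weight cone.

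The only delicate point is the face argument in the last step: one must be careful to use that $\w$ sits in the \emph{relative interior} of $\sigma_\w$ and not merely in $\sigma_\w$, since it is precisely this that forces the face $\sigma_\w\cap H_\ell$ to be the whole cone. Everything else is a direct consequence of the homogeneity of the defining binomials, which places $\w$ on every $H_\ell$, and of the defining property of a fan compatible with the arrangement $(H_\ell)_{\ell\in L}$.
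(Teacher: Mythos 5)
Your proposal has a genuine gap at the uniqueness step, which is the first assertion of the lemma. In the setup preceding the lemma, $\sigma_\w$ is an $r$-dimensional cone of $\Sigma$ \emph{containing} $\w$ --- not necessarily containing it in its relative interior. The partition-by-relative-interiors property of a fan gives you exactly one cone $\tau$ of $\Sigma$ with $\w\in{\rm relint}(\tau)$, but it does not by itself exclude that $\w$ lies on a common proper face of two or more distinct $r$-dimensional cones of $\Sigma$ (picture $\w=(1,1)$ in a regular fan subdividing $\check\R^2_{\geq 0}$ along the diagonal: it lies in two distinct $2$-dimensional cones). Your sentence ``this is the cone $\sigma_\w$, and it coincides with the $r$-dimensional cone produced in Proposition \ref{Resol}'' asserts precisely what has to be proved. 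To close the gap you must either (i) add a face argument: any $r$-dimensional cone $\sigma'$ of $\Sigma$ containing $\w$ has a unique face whose relative interior contains $\w$; by the partition property that face is $\tau$, which is $r$-dimensional by Proposition \ref{Resol}(1), and an $r$-dimensional face of an $r$-dimensional cone is the whole cone, so $\sigma'=\tau$; or (ii) use the arithmetic input the paper uses, namely \cite[Lemma 3.10]{Te2}: since the coordinates $\gamma_1,\ldots,\gamma_b$ of $\w$ generate a group of rational rank $r$, the vector $\w$ cannot lie in any rational cone of dimension $<r$, hence cannot lie on a proper face of an $r$-dimensional cone, and cannot lie in the intersection (a common proper face) of two distinct $r$-dimensional cones of $\Sigma$.

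This arithmetic fact is the engine of the paper's own proof: it yields uniqueness (the intersection of two distinct cones of $\Sigma$ containing $\w$ would be a rational cone of dimension $<r$ containing $\w$) and the containment $\sigma_\w\subseteq W$ (if $\sigma_\w\not\subseteq W$, then $\sigma_\w\cap W$ would be such a cone), with no appeal to compatibility with the $H_\ell$ or to Proposition \ref{Resol}. Your proof, by contrast, never touches the rational independence of the $\gamma_i$; you import it only through the black box of Proposition \ref{Resol}(1), which is admissible here since that proposition is a recalled result from \cite{Te2}, but it hides where the real content lies. The parts of your argument that stand are genuinely nice: the observation that the generating binomials are weight-homogeneous, so that $\langle\w,m^\ell-n^\ell\rangle=0$ and $\w\in H_\ell$ for every $\ell$, and the containment argument that compatibility forces $\sigma_\w\cap H_\ell$ to be a face of $\sigma_\w$ containing a relative-interior point, hence equal to $\sigma_\w$. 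Once uniqueness and $\w\in{\rm relint}(\sigma_\w)$ are properly secured, that gives a correct alternative route to $\sigma_\w\subseteq W$; but as written, the uniqueness claim is not established.
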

 \begin{proof}Since the vectors $m^\ell-n^\ell$ have entries of different signs, their linear duals $H_\ell$ meet the interior of the first quadrant of $\check\R^b_{\geq 0}$ so that $W$ is of dimension $r$. Since the coordinates of $\w$ generate a group of rational rank $r$, by \cite[Lemma 3.10]{Te2} the vector $\w$ cannot be contained in a rational cone of dimension $<r$. The intersection of two cones of $\Sigma$ containing $\w$ or, if $\sigma_\w$ was not contained in $W$, its intersection with $W$, would be such a cone.
 \end{proof}
Let $\sigma\in\Sigma$ be a cone of maximal dimension containing $\sigma_\w$. Write $\sigma=\langle a^1,\ldots ,a^r,a^{r+1},\ldots ,a^b\rangle$. The birational toric morphism $Z(\sigma)\to \A^b(k)$ is described in coordinates as:
$$\begin{array}{lr}
\ \ \ \ \ u_1=y_1^{a^1_1}\ldots y_r^{a^r_1}y_{r+1}^{a^{r+1}_1}\ldots y_b^{a^b_1}\\
\ \ \ \ \ .\\
\ \ \ \ \ .\\
\ \ \ \ \ .\\
\ \ \ \ \ u_i=y_1^{a^1_i}\ldots y_r^{a^r_i}y_{r+1}^{a^{r+1}_i}\ldots y_b^{a^b_i}\\
\ \ \ \ \ .\\
\ \ \ \ \ .\\
\ \ \ \ \ .\\
\ \ \ \ \ u_b=y_1^{a^1_b}\ldots y_r^{a^r_b}y_{r+1}^{a^{r+1}_b}\ldots y_b^{a^b_b}\\

\end{array}$$
From this array follows that the valuations of the $y_i$ are the coordinates of the vector $\w\in\sigma$ with respect to $a^1,\ldots ,a^b$. Since $\w\in\sigma_\w$, the values of $y_{r+1},\ldots ,y_b$ are zero. Let $\xi_1,\ldots ,\xi_b$ be elements of $R$ lifting generators $\overline\xi_1,\ldots ,\overline\xi_b$ of the $k$-algebra ${\rm gr}_\nu R$ according to the valuative Cohen Theorem. 
Having fixed a torific embedding of $R$, i.e., a morphism $(*)$ with generators of its kernel, we have seen that such an embedded local uniformization provides us with a composed morphism of valued rings $$R\subset \tilde R\subset k[[y_1,\ldots ,y_r]]\simeq k[[t^{\N^r}]].$$ 
We know that the corresponding inclusion of associated graded rings has to correspond to an inclusion ${\rm gr}_\nu R\simeq k[t^\Gamma]\subset k[t^{\N^r}]$.

 \begin{proposition}\label{OstroAbh}{\rm (Embedded Kaplansky embedding for rational valuations with finitely generated semigroup)}\par\noindent
Let $R$ be a complete equicharacteristic noetherian domain and $\nu$ a rational valuation of $R$ with finitely generated semigroup. Fix a field of representatives $k\subset R$, a torific embedding as above and a torific uniformizer $\Sigma$ in $\check \R^b_{\geq 0}$.
\begin{enumerate}
\item  The ring $R$ is contained in a uniquely determined regular local ring $$\tilde R=R[\xi^{\alpha_1},\ldots, \xi^{\alpha_b}]_{m_\nu\cap R[\xi^{\alpha_1},\ldots, \xi^{\alpha_b}]}$$ as above, endowed with a rational monomial valuation which induces the valuation of $R$. 
\item The monomial valuation of $\tilde R$ extends uniquely to a rational monomial valuation of its completion $k[[ y_1,\ldots , y_r]]$ and we may view this last ring as $k[[t^{\N^r}]]$, which can itself be identified with a subalgebra of $k[[t^{\Phi_{\geq 0}}]]$, with $\Phi=\Z^r$, by sending $ y_i$ to $t^{\nu( y_i)}$. The valuation on $k[[ y_1,\ldots , y_r]]$ and therefore also the valuation $\nu$ on $R$, is induced by the $t$-adic valuation of $k[[t^{\Phi_{\geq 0}}]]$. The image in $k[[t^{\N^r}]]$ or $k[[t^{\Phi_{\geq 0}}]]$ of each $x\in R$ is of the form $$x(t)=\rho t^{\nu (x)}+\sum_{\delta>\nu (x)}c_\delta t^\delta\  {\rm with} \ c_\delta\in k,\ \rho\in k^*.$$
In particular, the image of $\xi_i$ is of the form $\rho_it^{\gamma_i}+\sum_{\delta>\gamma_i}c^{(i)}_\delta t^\delta$.\par\smallskip\noindent
\item The image in $k[[ y_1,\ldots , y_r]]$ of an element $x\in R$ is the strict transform of $x$ by the ambient toric morphism inducing the $\nu$-birational morphism corresponding to the inclusion $R\subset \tilde R$.
\item The center of the valuation $\nu$ on $\tilde R$ corresponds to the point $y_1=\cdots =y_r=0,y_{r+1}=c_{r+1},\ldots ,y_b=c_b$ with $c_j\in k^*$.\par\noindent The image in $k[[ y_1,\ldots , y_r]]$ of each $\xi_i\in R$ is of the form
$$\xi_i\mapsto y_1^{a^1_i}\cdots y_r^{a^r_i}(c_{r+1}+z_{r+1})^{a^{r+1}_i}\cdots (c_b+z_b)^{a^b_i},$$
where the $z_j$ for $r+1\leq j\leq b$ are the images in $\tilde R$ of the $y_j-c_j$ and are power series without constant term in the $y_1,\ldots , y_r$, and $\sum_{j=1}^ra^j_i\nu(y_j)=\gamma_i$.
\item The value of $\rho_i$ is $c_{r+1}^{a^{r+1}_i}\ldots c_b^{a^b_i}$.\qed
\end{enumerate}
\end{proposition}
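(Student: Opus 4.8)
The strategy is to unwind Proposition \ref{Resol} together with the coordinate description of the chart $Z(\sigma)\to\A^b(k)$ displayed above, since that proposition already carries the geometric weight: existence of the regular fan, regularity of $\tilde R$, the location of the center of $\nu$, and the rational independence of the values of $y_1,\ldots,y_r$. Assertion (1) is then read off directly: the ring $\tilde R=(R_1)_{m_\nu\cap R_1}$ and its regularity come from \ref{Resol}(5)--(6), and the fact that its monomial valuation (in the regular system of parameters $y_1,\ldots,y_r$) induces $\nu$ comes from \ref{Resol}(7). The only genuine point is uniqueness: by Lemma \ref{weighin} the cone $\sigma_\w$ is uniquely determined, and although the maximal cone $\sigma\supseteq\sigma_\w$ chosen as a chart is not unique, $\tilde R$ is the local ring of the strict transform $\tilde\Cc$ at the center of $\nu$, hence intrinsic to the pair $(\nu,\Sigma)$ and independent of the chart.

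For (2) I would complete $\tilde R$. As $\tilde R$ is regular of dimension $r$ with maximal ideal generated by $y_1,\ldots,y_r$, its completion is $k[[y_1,\ldots,y_r]]$ by the usual Cohen theorem, and the monomial valuation extends to it with the same weights $\nu(y_j)$. Because these weights are rationally independent for the fixed embedding $\Phi\subset\R$, distinct monomials receive distinct values and the value of a series is the minimum of the values of its terms; this is exactly the $t$-adic valuation after the order-preserving injection $\N^r\hookrightarrow\Phi_{\geq 0}$, $(m_j)_j\mapsto\sum_j m_j\nu(y_j)$, which identifies $k[[t^{\N^r}]]$ with a subalgebra of $k[[t^{\Phi_{\geq 0}}]]$ sending $y_j\mapsto t^{\nu(y_j)}$. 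Given $x\in R$, the value of its image is $\nu(x)$ because the valuation is induced, and its degree-$\nu(x)$ component is a nonzero element of the one-dimensional $k$-space $({\rm gr}_\nu R)_{\nu(x)}$; under the graded inclusion ${\rm gr}_\nu R\simeq k[t^\Gamma]\subset k[t^{\N^r}]$ recalled above this component is $\rho t^{\nu(x)}$ with $\rho\in k^*$, giving the stated form of $x(t)$ and, for $x=\xi_i$, the initial term $\rho_it^{\gamma_i}$.

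Assertion (3) is then terminological: since $y_1,\ldots,y_r$ are local coordinates on the strict transform $\tilde\Cc$ at the center (\ref{Resol}(2),(6)), the image of $x$ under $R\subset\tilde R\subset k[[y_1,\ldots,y_r]]$ is the restriction to that strict transform of the pullback of $x$, i.e. its strict transform. For (4) I would substitute the coordinate array: the center is given by \ref{Resol}(4), and writing $y_j=c_j+z_j$ for $r<j\le b$ with $z_j=y_j-c_j$ in the maximal ideal, hence a power series in $y_1,\ldots,y_r$ without constant term, the displayed expression for $\xi_i=u_i$ follows. The relation $\sum_{j=1}^r a^j_i\nu(y_j)=\gamma_i$ holds because $\nu(y_j)=0$ for $j>r$ (these are units at the center) while $\nu(\xi_i)=\gamma_i$. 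Finally (5) is the computation of initial terms: each $z_j$ has positive value, so setting them to zero leaves the leading monomial $c_{r+1}^{a^{r+1}_i}\cdots c_b^{a^b_i}\,y_1^{a^1_i}\cdots y_r^{a^r_i}$, whose image under $y_j\mapsto t^{\nu(y_j)}$ is $c_{r+1}^{a^{r+1}_i}\cdots c_b^{a^b_i}\,t^{\gamma_i}$; comparison with (2) yields $\rho_i=c_{r+1}^{a^{r+1}_i}\cdots c_b^{a^b_i}$, which lies in $k^*$ since each $c_j\in k^*$.

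I expect no single deep obstacle here, since Proposition \ref{Resol} furnishes the geometry; the care is concentrated in two bookkeeping points. The first is the order-preserving identification of the monomial valuation on the completion with the $t$-adic valuation on Hahn series, which rests essentially on the rational independence of the $\nu(y_j)$ (without it the minimum of the term-values need not compute the value and the map $\N^r\to\Phi_{\geq 0}$ need not be injective). The second is checking that the substitution $y_j\mapsto c_j+z_j$ and the extraction of initial forms are legitimate in the completion, so that the leading coefficient of $\xi_i(t)$ comes out as the genuine unit $\rho_i=c_{r+1}^{a^{r+1}_i}\cdots c_b^{a^b_i}$ rather than being contaminated by higher-order terms.
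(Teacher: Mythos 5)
Your proposal is correct and takes essentially the same route as the paper: the paper states Proposition \ref{OstroAbh} with no separate proof (the \qed is attached to the statement), treating it exactly as you do, namely as a direct unwinding of Proposition \ref{Resol}, Lemma \ref{weighin}, the coordinate array for the chart $Z(\sigma)\to\A^b(k)$, and the one-dimensionality of the homogeneous components of ${\rm gr}_\nu R$ giving the initial coefficients in $k^*$. Your two ``bookkeeping'' points --- the rational independence of the $\nu(y_j)$ making the monomial valuation agree with the $t$-adic one, and the substitution $y_j=c_j+z_j$ at the center to extract $\rho_i=c_{r+1}^{a^{r+1}_i}\cdots c_b^{a^b_i}$ --- are precisely the ingredients the paper relies on implicitly.
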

\begin{subsection}{Examples}
\begin{example}\label{A_S}Let $k$ be an algebraically closed field of characteristic $p$. By \cite[Section 10]{Te2},\ the ring $k[[x]][y]/(y^p-x^{p-1}(1+y))$ valued by the unique extension of the $x$-adic valuation, embeds in $k[[t]]=k[[t^{\Z_{\geq 0}}]]$ by $x\mapsto \frac{t^p}{1-t^{p-1}},\ y\mapsto \frac{t^{p-1}}{1-t^{p-1}}$. If we want to represent $y$ as a series in $x$, we have to take $y=\zeta$, where $\zeta$ is the series we have seen above, which is not a Puiseux series and is only pseudo-convergent. 
\end{example}
In this paper we do not address the problem of uniqueness of the embedded Kaplansky embeddings which we build. The only step we take in this direction is the following:
\begin{example}\label{pi}{\rm \textit{This is an elaboration of an example kindly sent by referee $X_0$.}}\par
Let $k$ be an algebraically closed field. Let $R=k[[u_1,u_2]]$. Let $\ell,q$ be coprime integers not divisible by the characteristic of $k$ and $\nu$ a valuation on $R$ with $\nu (u_1)=\ell$, $\nu(u_2)=q$ and
$\nu(u_1^q-u_2^\ell)=q\ell+\pi$. Set $u_3=u_1^q-u_2^\ell$.\par\noindent
Let $a,b$ be positive integers such that $b\ell -aq=1$ and $t$ a positive integer.\par\noindent Consider in $\check \R^3$ the vectors $v_1=(\ell,q, q\ell+3),v_2=(a,b,t), v_3=(\ell,q, q\ell+4)$. The determinant ${\rm det}(v_1,v_2,v_3)$ is equal to one for any value of $t$. The convex cone $\sigma$ generated by $(v_1,v_2,v_3)$ in $\check \R^3$ is regular and contains the vector $(\ell,q, q\ell+\pi)$ in one of its faces.\par\medskip\noindent
The toric chart $\varpi (\sigma)\colon Z(\sigma)\to \A^3(k)$ defined by $\sigma$ is given in coordinates by:\par\noindent
$u_1=y_1^\ell y_2^ay_3^\ell,\  u_2=y_1^q y_2^by_3^q,\ u_3=y_1^{q\ell +3}y_2^t y_3^{q\ell +4}$,
and the transform of $u_1^q-u_2^\ell$ is $y_1^{q\ell}y_2^{aq}y_3^{q\ell}(y_2-1)$, while the transform of $u_1^q-u_2^\ell-u_3$ is
$$y_1^{q\ell}y_2^{aq}y_3^{q\ell}(y_2-1-y_1^3y_2^{t-aq}y_3^4).$$
We choose $t=aq$ and set $w=y_2-1$ so that the completion of the transform $\tilde R$ of our ring $R$ is $$k[[y_1,w,y_3]]/(w-y_1^3y_3^4).$$
From the expressions of $u_1,u_2,u_3$ we find that $\nu(y_1)+\nu(y_3)=1$ and that $\nu(y_1)=4-\pi, \nu(y_3)=\pi-3$ so that $\nu(w)=\pi$.\par
We are in the situation where the transform $\tilde R$ of $R$ is a regular local ring with coordinates $y_1,y_3$ and their valuations are rationally independent.\par
The morphism of $k$-algebras determined by $y_1\mapsto t^{4-\pi},y_3\mapsto t^{\pi-3},w\mapsto t^\pi$ is an embedding of $\tilde R$ into $k[[t^{\Phi_{\geq 0}}]]$ with $\Phi = \Z+\Z\pi$. It induces the injection $R\subset k[[t^{\Phi_{\geq 0}}]]$ determined by $u_1\mapsto (1+t^\pi )^at^\ell, u_2\mapsto (1+t^\pi )^bt^q$, so that $u_1^q-u_2^\ell\mapsto -(1+t^\pi)^{aq}t^{q\ell+\pi}=-u_1^qt^\pi$.\par
Let us now consider another embedding of $R$ into $k[[t^{\Phi_{\geq 0}}]]$, corresponding to a regular cone $\sigma'$ generated by the vectors $v_1,v'_2=(a',b',a'q), v_3$, with $b'\ell-a'q=1$.\par\noindent

To prove that the two embeddings differ by an inner valuation-preserving $k$-automorphism of $k[[t^{\Phi_{\geq 0}}]]$, it suffices (see \cite[\S 4.2]{K-S}) to prove the existence of a morphism of groups $u\colon \Phi\to U$, where $U$ is the multiplicative group of units of $k[[t^{\Phi_{\geq 0}}]]$, such that:\par\medskip\noindent
 $\bullet$ For any series $\sum_\varphi c_\varphi t^\varphi\in k[[t^{\Phi_{\geq 0}}]]$ the series $\sum_\varphi c_\varphi u(\varphi) t^\varphi $ belongs to $k[[t^{\Phi_{\geq 0}}]]$.\par

$$\leqno{\bullet}\ \ \ \ \ \ \ \ \ \ \ \ \ \ \ \ \ \ \ \ (1+u(\pi)t^\pi)^a u(\ell)t^\ell=(1+t^\pi)^{a'}t^\ell,$$ $$(1+u(\pi)t^\pi)^bu(q)t^q=(1+t^\pi)^{b'}t^q,\ {\rm and}$$
$$ (1+u(\pi) t^\pi)^{aq}u(q\ell+\pi)t^{q\ell+\pi}= (1+t^\pi)^{a'q}t^{q\ell+\pi}.$$
The automorphism will then be described by $\sum_\varphi c_\varphi t^\varphi \mapsto \sum_\varphi c_\varphi u(\varphi) t^\varphi $.\par
 We see that $u(\pi)=1$ and $u(\ell)=(1+t^\pi)^{a'-a}, u(q)=(1+t^\pi)^{b'-b}$ satisfy these conditions. We have that $(b'-b)\ell-(a'-a)q=0$, so that $\frac{b'-b}{q}=\frac{a'-a}{\ell}$ is an integer $n$.\par The morphism $u\colon\Z+\Z\pi \to U$ determined by  $u(1)=(1+t^\pi)^n, u(\pi)=1$ gives the desired automorphism provided that $$\sum_{r,s}c_{r,s}(1+t^\pi)^{nr}t^{r+s\pi}\in k[[t^{\Phi_{\geq 0}}]]$$ for any series $\sum_{r,s}c_{r,s}t^{r+s\pi}\in k[[t^{\Phi_{\geq 0}}]]$. But the set of exponents of this series differs from the original set by elements of the semigroup generated by $1$ and $\pi$ so that it is well ordered.
 \end{example}
   \end{subsection}
\begin{remark}(from \cite[section 4.3]{Te1}) There is an explicit algorithm describing how the semigroups of values of the $k[[y_1,\ldots ,y_r]]$ fill up the semigroup $\Phi_{\geq 0}$ as the cones $\sigma_\w$ become smaller and smaller. By point 1) of the recall we can view them as belonging to a sequence of refinements of fans with support $\R_{\geq 0}^b$.\par Let $\tau_1,\ldots ,\tau_r$ be rationally independent positive real numbers and let $\Phi\subset \R$ be the group which they generate, ordered by the order of $\R$. Let  $\sigma_0$ be a strictly convex regular rational cone of dimension $r$ in $\check\R_{\geq 0}^r$  containing the vector $\w=(\tau_1,\ldots ,\tau_r)$. The Jacobi-Perron algorithm produces a nested sequence $$\sigma_0\supset\ldots\supset \sigma_h\supset\sigma_{h+1}\supset \ldots\ldots\ni\w$$
of strictly convex regular rational cones of dimension $r$ converging to $ \w$.\par\noindent
The semigroup $\Phi_{\geq 0}$ is isomorphic as a semigroup to the semigroup of integral points in $\R^r$ of the half-space $\sum_{i=1}^r\tau_ia_i\geq 0$ which is the convex dual $\check \w$ of the vector $\w$. The convex duals $\check\sigma_h$ of the cones $\sigma_h$ are an increasing nested sequence of regular convex cones of dimension $r$ contained in $\check \w$ whose integral points $\check\sigma_h\cap\Z^r$ form nested free semigroups $\N^r_h$ which fill up $\Phi_{\geq 0}$ because the hyperplane $\sum_{i=1}^r\tau_ia_i=0$ has no integral point except the origin.
\end{remark}
\emph{We can now interpret Proposition \ref{OstroAbh} as showing that, if $\nu$ is a rational valuation with finitely generated semigroup and $R$ is complete,  just like in the case of branches {\rm (}see \cite{G-T} and \cite{GP}{\rm )}, the formal space corresponding to $R$ is obtained by deforming the parametrization $u_i\mapsto \rho_it^{\gamma_i}$ of the formal space corresponding to the binomial equations of ${\rm gr}_\nu R$ in $\widehat{k[(u_i)_{i\in I}]}$, which by \cite[Section 3.1]{Te3} is isomorphic to the space corresponding to $k[[t^\Gamma]]$, into $$u_i\mapsto\xi_i (t)=\rho_i t^{\gamma_i}+\sum_{\delta>\gamma_i}c^{(i)}_\delta t^\delta \ {\rm with}\  c^{(i)}_\delta\in k,\ \rho_i\in k^*$$ inside the space with coordinates $(u_i)_{i\in I}$.} \par\smallskip\noindent
Our purpose in this text is to show that the same is true for any rational valuation, at least in the rank one case.
\par\smallskip
 If we only assume that $R\subset R_\nu$ is excellent and analytically irreducible with a field of representatives $k\subset R$ of its algebraically closed residue field (see \cite[Proposition 7.2]{Te2}), the valuation $\nu$ is induced by the embedding $R\subset \hat R^m\subset k[[t^{\Phi_{\geq 0}}]]$ from the $t$-adic valuation $\nu_t$ of the last ring.  \par\medskip\noindent
\end{section}
\begin{section}{Approximating a rational valuation by semivaluations with finitely generated semigroup}\label{App}
In this section, we show how one can apply the valuative Cohen Theorem to construct a sequence of valuations with finitely generated semigroup on quotients of $R$ which provide better and better approximations to the valuation $\nu$. The rank one and algebraically closed residue field hypotheses are not needed.\par\smallskip\noindent
Let us choose a finite subset $B_0$ of $I$ which contains:\begin{itemize}
\item The set $J$ of indices of the elements $(\xi_i)_{i\in J}$ minimally generating the maximal ideal of $R$;
\item A set of $r={\rm rat.rk.}\Phi$ indices $i_1,\ldots ,i_r$ such that the $(\gamma_{i_t})_{t=1,\ldots ,r}$ rationally generate the group $\Phi$;
\item The indices of the finite set of variables $u_i$ appearing in the equations $(F_q)_{1\leq q\leq f}$.
\end{itemize}
\par\medskip

It is stated in \cite[Part 1]{Te4} and proved in \cite{Te3} that there is a nested sequence of finite subsets of $I$:
$$B_0\subset \cdots \subset B_a\subset B_{a+1}\subset\cdots \subset I,$$
such that $\bigcup_{a\in \N}B_a=I$ and which approximate the countable ordinal $I$ in such a way that the following properties hold:\par\noindent We denote by $B$ one of the $B_a$ and by $\iota_{B}$ the injection $k[[(u_i)_{i\in B}]]\subset k[[(u_i)_{i\in I}]]$ and consider in $R$ the ideal $K_B$ of $R$ generated by the images  by the morphism of $k$-algebras $$\pi\circ\iota_B\colon k[[(u_i)_{i\in B}]]\longrightarrow R\ ; \ \ u_i\mapsto \xi_i$$ of the $(F_\ell)_{\ell\in L}$ whose initial binomial involves only variables whose index is in $B$ and in which all the $u_i$ with $i\notin B$ have been set equal to $0$.
 \par\noindent 
 These series are denoted by $F_\ell\vert B$. Note that the $F_\ell$ which use only variables in $B$ are mapped to zero in this operation and the images by $\pi\circ\iota_B$ of the other $F_\ell\vert B$ are contained in the ideal generated by the $(\xi_i)_{i\notin B}$. 
 \par The ideal $K_B$ is the image by $\pi\circ\iota_B$ of the ideal $\Kk_B$ of $k[[(u_i)_{i\in B}]]$ generated by the $F_\ell\vert B$. \par\medskip
  Consider the following commutative diagram. 
\[\xymatrix{&k[[(u_i)_{i\in B}]]\ar[drr]_{\pi_B} \ar @{^{(}->}[r]^{\iota_B}&\widehat{k[(u_i)_{i\in I}]} \ar[r]^\pi&  R\ar[d]^{\kappa_B}\\
              & && R/K_B}\]
  By \cite[Lemma 4.2]{Te3}, the kernel of the surjective morphism $\pi\circ\iota_B$ is generated by the $(F_i)_{i\in B\setminus J}, (F_q)_{1\leq q\leq f}$ and the kernel of $\pi_B$ is generated by the $F_\ell\vert B$ for those $F_\ell$ whose initial binomial is in $k[[(u_i)_{i\in B}]]$, \textit{which include the generators of the kernel of the morphism $\pi\circ\iota_B$.}\par\medskip\noindent
  We now recall the main results in \cite[Section 4]{Te3}. See also \cite[Part 1]{Te4}.
    \begin{theorem}\label{A}{\rm (Approximation of $\nu$ by semivaluations with finitely generated semigroup)}
     \begin{enumerate}
\item The ideals $K_B$ are prime. The quotients $R/K_B$ are of dimension equal to the rational rank $r(\nu)$ of $\nu$ and each is endowed with an Abhyankar valuation $\nu_B$ whose semigroup of values $\Gamma_B$ is finitely generated by the $(\gamma_i)_{i\in B}$. The valuation $\nu_B$ is the image by the morphism $\pi_B$ of the weight valuation of $k[[(u_i)_{i\in B}]]$. 
\item We have the equality $\nu (K_B)={\rm min}_{i\notin B}\gamma_i$, and $\bigcap_{a\in \N}K_{B_a}=(0)$.
\item For any $x\in R\setminus\{0\}$ there is an $a(x)\in \N$ such that for $a\geq a(x)$, the equality $\nu (x)=\nu_{B_a}(x\ {\rm mod.}K_{B_a})$ holds.
\end{enumerate}
  \end{theorem}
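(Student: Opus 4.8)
My plan is to prove the three assertions in turn, using as the main engine the standard-basis description of $\ker\pi_B$ recalled just above (from \cite[Lemma 4.2]{Te3}) together with the identification of the relevant associated graded ring with the semigroup algebra $k[t^{\Gamma_B}]$, where $\Gamma_B=\langle\gamma_i:i\in B\rangle$.

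For assertion (1) I would pass to associated graded rings for the weight/valuation filtrations. Since $R/K_B\cong k[[(u_i)_{i\in B}]]/\ker\pi_B$, it suffices to show $\ker\pi_B$ is prime. The generators $F_\ell|B$ form a standard basis for $w$, so the initial ideal ${\rm gr}_w(\ker\pi_B)$ is the binomial ideal generated by the initial binomials $U^{m^\ell}-\lambda_\ell U^{n^\ell}$ supported on $B$, and the quotient $k[(U_i)_{i\in B}]/{\rm gr}_w(\ker\pi_B)$ is exactly $k[t^{\Gamma_B}]$. As the semigroup algebra of a cancellative torsion-free monoid, $k[t^{\Gamma_B}]$ is a domain; hence ${\rm gr}_w(\ker\pi_B)$ is prime, and since the $w$-filtration on the complete noetherian ring $k[[(u_i)_{i\in B}]]$ is separated, an ideal with prime initial ideal is prime. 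This gives primality of $\ker\pi_B$, hence of $K_B$, and simultaneously shows ${\rm gr}_{\nu_B}(R/K_B)\cong k[t^{\Gamma_B}]$, so that $\nu_B$ is a genuine valuation with finitely generated semigroup $\Gamma_B$. Finally $\dim R/K_B=\dim{\rm gr}_{\nu_B}(R/K_B)=\dim k[t^{\Gamma_B}]=r$, which equals the rational rank of $\nu_B$ because $\{i_1,\dots,i_r\}\subset B_0\subset B$; thus $\nu_B$ is Abhyankar.

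For assertion (2) I argue by two inequalities. Writing $F_\ell$ as its initial binomial plus higher-weight terms and using $\pi(F_\ell)=0$, the image $\pi\circ\iota_B(F_\ell|B)$ equals the negative of the sum of the terms of $F_\ell$ involving some $u_i$ with $i\notin B$; hence every generator of $K_B$ lies in $(\xi_i:i\notin B)$ and $\nu(K_B)\geq\min_{i\notin B}\gamma_i=:\gamma_{i_0}$ with $i_0=\min(I\setminus B)$. For the reverse inequality I use the equation $F_{i_0}$ of part 4 of Theorem \ref{Cohen}: since every index $<i_0$ lies in $B$, the initial binomial of $F_{i_0}$ and all its terms of weight $<\gamma_{i_0}$ are supported on $B$, so forming $F_{i_0}|B$ deletes only the monomial $-u_{i_0}$, and $\pi\circ\iota_B(F_{i_0}|B)=\pi(F_{i_0}+u_{i_0})=\xi_{i_0}\in K_B$, giving $\nu(K_B)\leq\gamma_{i_0}$. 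Thus $\nu(K_B)=\min_{i\notin B}\gamma_i$. The values $\nu(K_{B_a})$ are non-decreasing; that they are cofinal in $\Gamma$, which forces $\bigcap_aK_{B_a}=(0)$, follows in the rank-one case from the discreteness of $\Gamma$ (only finitely many generators lie below any value, by \cite[Section 3]{C-T}, so each such finite set is absorbed by some $B_a$), and in general from the construction of the exhaustion $(B_a)$ in \cite{Te3}.

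For assertion (3), given $x\neq0$ with $\nu(x)=\gamma$ I choose $a(x)$ so that $\nu(K_{B_a})>\gamma$ for $a\geq a(x)$, using the cofinality of (2). One inequality is then immediate: the image filtration through the small ring is dominated by the image filtration $\bar\nu$ through the big ring, and $\bar\nu(\kappa_{B_a}x)=\max_{z\in K_{B_a}}\nu(x+z)=\nu(x)$ because $\nu(x)<\nu(K_{B_a})$, so $\nu_{B_a}(\kappa_{B_a}x)\leq\nu(x)$. For the reverse inequality note that $\nu(x)<\nu(K_{B_a})$ forces $\gamma\in\Gamma_{B_a}$ (every generator occurring in an expression of $\gamma$ is $\leq\gamma<\min_{i\notin B_a}\gamma_i$, hence indexed in $B_a$), so all weight-$\gamma$ monomials already live on $B_a$; the standard-basis exactness of (1) then identifies ${\rm gr}(\kappa_{B_a})\colon{\rm gr}_\nu R\cong k[t^\Gamma]\to k[t^{\Gamma_{B_a}}]\cong{\rm gr}_{\nu_B}(R/K_{B_a})$ with the canonical projection, which sends ${\rm in}_\nu(x)=c\,t^\gamma$ to $c\,t^\gamma\neq0$, whence $\nu_{B_a}(\kappa_{B_a}x)=\gamma$. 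The routine points are the filtered-ring reductions and the combinatorics of $\Gamma_B$; the delicate point, which I expect to be the main obstacle, is precisely this reverse inequality, i.e. the strictness of $\kappa_{B_a}$ for the two valuation filtrations, equivalently the exactness of $0\to{\rm gr}_\nu(K_{B_a})\to{\rm gr}_\nu R\to{\rm gr}_{\nu_B}(R/K_{B_a})\to0$ with ${\rm gr}_\nu(K_{B_a})$ the kernel of $k[t^\Gamma]\to k[t^{\Gamma_{B_a}}]$. I would extract this from the standard-basis property of the $F_\ell|B_a$ in \cite[Lemma 4.2]{Te3}: because those generators reduce to the initial binomials on $B_a$ with strictly higher-weight tails, no cancellation can produce a small-ring preimage of $\kappa_{B_a}x$ of weight strictly below $\gamma$.
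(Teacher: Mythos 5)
First, a point of comparison: the paper itself contains no proof of Theorem \ref{A}. It is explicitly ``recalled'' from \cite[Section 4]{Te3} (see also \cite[Part 1]{Te4}), so your attempt is being measured against an external reference rather than an internal argument. That said, part of your reconstruction is genuinely correct: your proof of the equality $\nu(K_B)={\rm min}_{i\notin B}\gamma_i$ in assertion (2) is complete and clean. The inclusion $K_B\subseteq((\xi_i)_{i\notin B})$ gives one inequality, and since the $\gamma_i$ increase with $i$, every index $<i_0:={\rm min}(I\setminus B)$ lies in $B$, so all terms of $F_{i_0}$ of weight $<\gamma_{i_0}$ are supported on $B$; hence $F_{i_0}\vert B$ is one of the defining generators of $K_B$ and its image is exactly $\xi_{i_0}$, giving $\nu(K_B)\leq\gamma_{i_0}$. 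That is the right argument, and the observation $i_0\notin J$ (so that $F_{i_0}$ exists) is correctly implicit in $J\subseteq B_0\subseteq B$.

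The rest, however, has genuine gaps precisely where the mathematical content of \cite{Te3} lies. (i) In assertion (1) you assert that the truncations $F_\ell\vert B$ form a standard basis for $w$ and that the quotient by their initial binomials is $k[t^{\Gamma_B}]$. Neither follows formally from the standard-basis property of the $F_\ell$ in $\widehat{k[(u_i)_{i\in I}]}$: setting variables to zero does not in general preserve standard bases, and the binomials of $L$ supported on $B$ need not generate the full lattice ideal of relations among the $(\gamma_i)_{i\in B}$ for an arbitrary finite $B\supseteq B_0$ --- these are properties of the specially constructed exhaustion $(B_a)$, i.e.\ they are the theorem (the overweight-deformation theorem of \cite[\S 3]{Te2} and \cite{Te3}), not an input. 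Likewise $\dim R/K_B=\dim{\rm gr}_{\nu_B}(R/K_B)$ is not a general fact about valuation filtrations (it fails for non-Abhyankar valuations); it is the Abhyankar property you are trying to establish. You also need separatedness of the induced filtration on the quotient to pass from ``prime initial ideal'' to ``prime ideal'', which is nontrivial when the value group has rank $>1$ --- and this section explicitly allows arbitrary rank. (ii) Your proof of assertion (3) is circular: to identify ${\rm gr}(\kappa_{B_a})$ with the projection $k[t^\Gamma]\to k[t^{\Gamma_{B_a}}]$ you must already know that $\kappa_{B_a}$ is filtration-compatible, i.e.\ $\nu_{B_a}(\kappa_{B_a}x)\geq\nu(x)$ for all $x$, which is exactly the reverse inequality you are proving. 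The non-circular route is to first establish the inclusion $((\xi_j)_{j\notin B_a})\subseteq K_{B_a}$ (which does not follow from your argument, since for $j>i_0$ the initial binomial of $F_j$ may involve variables outside $B_a$); granted that, one truncates a Cohen preimage $Y$ of $x$ with $w(Y)=\nu(x)$ to $Y\vert B_a$ and obtains a $\pi_{B_a}$-preimage of $\kappa_{B_a}x$ of weight $\geq\nu(x)$. (iii) Finally, your cofinality argument for $\bigcap_a K_{B_a}=(0)$ works only in rank one: if $I$ has ordinal $>\omega$, a finite $B_a$ can never contain the initial $\omega$-segment of $I$, so $\nu(K_{B_a})<\gamma_\omega$ for all $a$ and these values are never cofinal in $\Gamma$; value considerations alone cannot then force the intersection to vanish, so the general case genuinely requires the structure of the $B_a$ from \cite{Te3}, as you concede.
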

\end{section}
\begin{section}{Projective systems of torific uniformizers}\label{PSTI}
In this section we study the relation between torific uniformizers associated to two finite subsets $B_a\subset B_{a+1}\subset I$ of variables as in section \ref{App}. As usual $b_a$ denotes the cardinality of $B_a$.\par\noindent Let $\Ll^{b_a-r}\subset \Z^{b_a}$ be the subgroup generated by the vectors $m^\ell-n^\ell$ of $ \Z^{b_a}$. The corresponding binomials $u^{m^\ell}-\lambda_\ell u^{n^\ell}$ generate the kernel of the morphism $k[(u_i)_{i\in B_a}]\to {\rm gr}_{\nu_{K_{B_a}}}R/K_{B_a}$. Since this last ring is of dimension $r$, the group $\Ll^{b_a-r}$ is of rank $b_a-r$.\par
By \cite[Theorem 2.1]{E-S}, since ${\rm gr}_{\nu_{K_{B_a}}}R/K_{B_a}$ is a domain, the lattice  $\Ll^{b_a-r}$ is saturated in $ \Z^{b_a}$ and we have a split exact sequence of $\Z$-modules
$$(0)\longrightarrow \Ll^{b_a-r}\longrightarrow\Z^{b_a}\longrightarrow\Z^r\to (0).$$
Since by construction $\Z^{b_a}$ is saturated in $\Z^{b_{a+1}}$, it follows that $\Ll^{b_a-r}$ is saturated in $\Ll^{b_{a+1}-r}$ and in $\Z^{b_{a+1}}$.\par\noindent 
The group $\Z^r$ is by construction the group generated by the images of the elements of the natural basis of $\Z^{b_a}$ in the quotient $\Z^{b_a}/\Ll^{b_a-r}$. It is the group generated by the semigroup $\Gamma_{B_a}=\langle(\gamma_i)_{i\in B_a}\rangle$.\par\noindent The diagram 
\[\xymatrix{&\ar[dd] \Z^{b_a}& \longrightarrow \ \ & \ar[dd]^{\rm Identity} \Z^r 
             \\
         & &   &&   \\
             &\Z^{b_{a+1}}&\longrightarrow\ &\ \Z^r } \]
             where the left vertical arrow comes from the natural injection of bases, does not commute.\par\noindent For example the semigroup $\Gamma_0=\langle 2,3\rangle$ generates $\Z$ as a quotient of $\Z^2$ by $\Z(3,-2)$. The semigroup $\Gamma_1=\langle 4,6,13\rangle$ also generates $\Z$ as a quotient of $\Z^3$ by the subgroup generated by $(3,-2,0)$ and $(5,1,-2)$. This corresponds to the fact that the subgroup of $\Phi$ generated by the $(\gamma_i)_{i\in B_a}$ may grow with $a$.\par
 Consider the $r$-dimensional weight cones $W_a\subset \check\R^{b_a}$ and $W_{a+1}\subset \check\R^{b_{a+1}}$. Denoting  by $\pi_{a+1,a}\colon\check\Z^{b_{a+1}}\to\check\Z^{b_a}$ the natural projection as well as its extension $\check\R^{b_{a+1}}\to\check\R^{b_a}$, we see that we have the inclusion $W_{a+1}\subset\pi_{a+1,a}^{-1}(W_a)$.\par\noindent Let us denote by $W_a^{\rm aff}=(\Ll^{b_a-r})^\perp$ the affine hull of $W_a$ in $\check\R^{b_a}$.
\begin{lemma}\label{Latt} \begin{enumerate}
\item The dual $\check\Z^r\subset\check\Z^{b_a}$ of the quotient $\Z^{b_a}/\Ll^{b_a-r}$ is the intersection with $W^{\rm aff}_a$ of the integral lattice $\check\Z^{b_a}$ of $\check\R^{b_a}$.
\item The equality $\Ll^{b_{a+1}-r}\cap \Z^{b_a}=\Ll^{b_a-r}$ holds and entails a natural injection $\Z^{b_a}/\Ll^{b_a-r}\subset\Z^{b_{a+1}}/\Ll^{b_{a+1}-r}$. 
\item The injection $\Z^{b_a}\subset \Z^{b_{a+1}}$ decomposes as $$\iota\colon\Ll^{b_a-r}\oplus\Z^r\subset \Ll^{b_{a+1}-r}\oplus\Z^r,$$ inducing a $\Z$-linear injection $\Z^r\to\Z^r$.
\item The morphism $W^{\rm aff}_{a+1}\cap\check\Z^{b_{a+1}}\to W^{\rm aff}_a\cap\check\Z^{b_a}$ induced by the projection $\pi_{a+1,a}$ is the dual of the injection $\Z^{b_a}/\Ll^{b_a-r}\subset\Z^{b_{a+1}}/\Ll^{b_{a+1}-r}$.
\end{enumerate}
\end{lemma}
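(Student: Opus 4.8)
The plan is to handle the four assertions in order, reducing everything to one structural identification: $\Ll^{b_a-r}$ is the kernel of the \emph{weight homomorphism} $\theta_a\colon\Z^{b_a}\to\Phi$ sending the $i$-th basis vector to $\gamma_i$, combined with the saturation supplied by \cite[Theorem 2.1]{E-S}. For (1), I would dualize the split exact sequence $(0)\to\Ll^{b_a-r}\to\Z^{b_a}\to\Z^r\to(0)$. Since $\Z^r$ is free, dualizing gives an exact sequence $0\to(\Z^{b_a}/\Ll^{b_a-r})^\vee\to\check\Z^{b_a}\to(\Ll^{b_a-r})^\vee\to0$, and the image of $(\Z^{b_a}/\Ll^{b_a-r})^\vee$ in $\check\Z^{b_a}$ is exactly the lattice of integral functionals annihilating $\Ll^{b_a-r}$. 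As $W_a^{\rm aff}=(\Ll^{b_a-r})^\perp$ is by definition the space of \emph{real} functionals annihilating $\Ll^{b_a-r}$, this image is $W_a^{\rm aff}\cap\check\Z^{b_a}$. The only delicate point is that no index is lost in passing from real to integral annihilators, and this is exactly what the saturation of $\Ll^{b_a-r}$ in $\Z^{b_a}$ guarantees: it makes $\Z^{b_a}/\Ll^{b_a-r}$ torsion free, so its dual injects onto the full annihilator lattice.

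For (2), I would first prove $\Ll^{b_a-r}=\ker\theta_a$. The binomials $u^{m^\ell}-\lambda_\ell u^{n^\ell}$ are homogeneous for the weight, so each generator $m^\ell-n^\ell$ lies in $\ker\theta_a$ and thus $\Ll^{b_a-r}\subseteq\ker\theta_a$. Because $B_0\subseteq B_a$ contains $r$ rationally independent values, $\mathrm{im}\,\theta_a$ has rank $r$, so $\ker\theta_a$ is a saturated sublattice of rank $b_a-r$ (the kernel of a map to the torsion-free group $\Phi$); since $\Ll^{b_a-r}$ is saturated of the same rank, the two coincide. The identity $\Ll^{b_{a+1}-r}\cap\Z^{b_a}=\Ll^{b_a-r}$ is then immediate: a vector $v\in\Z^{b_a}$, viewed in $\Z^{b_{a+1}}$ with vanishing new coordinates, lies in $\ker\theta_{a+1}$ iff $\sum_{i\in B_a}v_i\gamma_i=0$ in $\Phi$, which is precisely $v\in\ker\theta_a$. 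The induced injection $\Z^{b_a}/\Ll^{b_a-r}\hookrightarrow\Z^{b_{a+1}}/\Ll^{b_{a+1}-r}$ follows because the composite $\Z^{b_a}\hookrightarrow\Z^{b_{a+1}}\to\Z^{b_{a+1}}/\Ll^{b_{a+1}-r}$ has kernel $\Z^{b_a}\cap\Ll^{b_{a+1}-r}=\Ll^{b_a-r}$.

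For (3), I would fix splittings realizing $\Z^{b_a}\cong\Ll^{b_a-r}\oplus\Z^r$ and $\Z^{b_{a+1}}\cong\Ll^{b_{a+1}-r}\oplus\Z^r$, available because both relation lattices are saturated. The inclusion $\iota$ carries the first summand into the first, since $\iota(\Ll^{b_a-r})\subseteq\Ll^{b_{a+1}-r}$, and by (2) it induces on the quotient summands exactly the injection $\beta\colon\Z^r\to\Z^r$ found there. I would stress that $\beta$ need not be the identity: as the example $\Gamma_0=\langle2,3\rangle$, $\Gamma_1=\langle4,6,13\rangle$ shows, the subgroup of $\Phi$ generated by the $\gamma_i$ may grow, so $\beta$ can have nontrivial finite cokernel. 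This is the one place where the naive picture, namely the non-commuting square carrying an ``identity'' arrow above, must be corrected, and recording the genuine injection $\beta$ in its place is the only substantive point of the lemma.

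Finally (4) is a formal consequence by transport of structure under duality. Using (1) to identify both lattices of integral points with the duals of the respective quotients, I would note that $\pi_{a+1,a}$ is by definition the dual of $\iota\colon\Z^{b_a}\hookrightarrow\Z^{b_{a+1}}$, i.e. restriction $\phi\mapsto\phi\circ\iota$. A functional $\phi\in W_{a+1}^{\rm aff}\cap\check\Z^{b_{a+1}}$ annihilates $\Ll^{b_{a+1}-r}$, hence factors as $\phi=\bar\phi\circ q_{a+1}$; since $q_{a+1}\circ\iota=\beta\circ q_a$, one gets $\pi_{a+1,a}(\phi)=\phi\circ\iota=(\bar\phi\circ\beta)\circ q_a=\beta^\vee(\bar\phi)$ under the identification of (1). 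Thus $\pi_{a+1,a}$ restricts to $\beta^\vee$, the dual of the injection from (2), as claimed. The main obstacle throughout is simply the bookkeeping of the two distinct saturated lattices $\Ll^{b_a-r}\subset\Ll^{b_{a+1}-r}$ and the non-identity map $\beta$ between the quotients; once saturation pins down assertion (1), parts (2)--(4) reduce to dualizing a commutative map of split exact sequences.
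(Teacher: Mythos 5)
Your proof is correct and takes essentially the same route as the paper's: the paper's very terse argument rests on exactly your key identification of $\Ll^{b_a-r}$ with the full lattice of relations among the $(\gamma_i)_{i\in B_a}$ (your kernel of the weight homomorphism, pinned down by the Eisenbud--Sturmfels saturation and the rank count), from which (1)--(4) follow by dualizing the split exact sequences and passing to annihilator lattices. Your write-up simply makes explicit the details the paper leaves implicit, including the correct observation that the induced map $\beta\colon\Z^r\to\Z^r$ need not be an isomorphism.
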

\begin{proof} An integral point of $W_a$ is an element of $\check\Z^{b_a}$ vanishing on $\Ll^{b_a-r}$ and thus an element of the $\Z$-dual of $\Z^{b_a}/\Ll^{b_a-r}$. Conversely, $\check\Z^r\subset \check\Z^{b_a}$ is the module of $\Z$-linear forms on $\Z^{b_a}$ vanishing on $\Ll^{b_a-r}$ and they are integral points of $W_a$.\par The second statement follows from the fact that $\Ll^{b_a-r}$ and $\Ll^{b_{a+1}-r}$ correspond respectively to relations between the $(\gamma_i)_{i\in B_a}$ and $(\gamma_i)_{i\in B_{a+1}}$.\par The third statement follows from then second one since $\Ll^{b_{a+1}-r}$ is a direct factor in $\Z^{b_{a+1}}$ and the image of $\Z^r$ in \break$ \Ll^{b_{a+1}-r}\oplus\Z^r$ by the morphism $\iota$ cannot meet $\Ll^{b_{a+1}-r}$.\par
The last statement follows from statement $1)$ applied to $W_a$ and $W_{a+1}$. 
\end{proof}
\begin{definition} Let $(\xi_i)_{i\in I}$ be a system of representatives in $R$ of the generators of ${\rm gr}_\nu R$ according to the valuative Cohen Theorem. Let $R/K_{B_a}$ and $R/K_{B_{a+1}}$ be equipped with the torific embeddings determined by the images of the $(\xi_i)_{i\in B_a}$ and $(\xi_i)_{i\in B_{a+1}}$ respectively. A torific uniformizer $\Sigma_{a+1}$ for $R/K_{B_{a+1}}$ is \textit{compatible} with a torific uniformizer $\Sigma_a$ for $R/K_{B_a}$ if it is compatible with the $(\pi^{-1}_{a+1,a}(\tau))_{\tau\in\Sigma_a}$. In view of Definition \ref{mizer}, the fan $\Sigma_{a+1}$ is also compatible with $W_{a+1}$. 
\end{definition}
\begin{lemma}\label{compat} Given a torific uniformizer for $R/K_{B_a}$, there exist compatible torific uniformizers for  $R/K_{B_{a+1}}$.
\end{lemma}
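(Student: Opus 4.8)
The plan is to build the compatible uniformizer $\Sigma_{a+1}$ in two stages: first fix a fan on $\check\R^{b_a}_{\geq 0}$ coming from the given uniformizer $\Sigma_a$, then lift and refine it on $\check\R^{b_{a+1}}_{\geq 0}$ so that it simultaneously resolves the new data attached to $B_{a+1}$. First I would record what the hypothesis gives us. We are handed a regular fan $\Sigma_a$ with support $\check\R^{b_a}_{\geq 0}$ which is compatible with the hyperplanes $(H_\ell)_{\ell}$ and cones $(C_\ell)_\ell$ appearing in Proposition \ref{Resol} for $R/K_{B_a}$, and whose distinguished cone $\sigma_{\w,a}$ lies in the weight cone $W_a$ (Lemma \ref{weighin}). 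Pulling back along the projection $\pi_{a+1,a}\colon\check\R^{b_{a+1}}\to\check\R^{b_a}$, the preimages $(\pi^{-1}_{a+1,a}(\tau))_{\tau\in\Sigma_a}$ form a rational (generally non-regular, non-strictly-convex) fan on the slab $\pi_{a+1,a}^{-1}(\check\R^{b_a}_{\geq 0})$; intersecting with $\check\R^{b_{a+1}}_{\geq 0}$ gives a rational polyhedral subdivision of the new first quadrant. This is the collection of cones with which $\Sigma_{a+1}$ must be compatible, by the definition of compatibility just stated.

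Next I would assemble the full finite list of rational cones and hyperplanes in $\check\R^{b_{a+1}}_{\geq 0}$ that $\Sigma_{a+1}$ is required to respect: the pulled-back cones $\pi^{-1}_{a+1,a}(\tau)$ just described; the hyperplanes $(H_\ell)_{\ell}$ dual to the vectors $m^\ell-n^\ell\in\Z^{b_{a+1}}$ attached to $R/K_{B_{a+1}}$; and the strictly convex cones $(C_\ell)_\ell$ furnished by Proposition \ref{Resol} for the torific embedding of $R/K_{B_{a+1}}$. Since $B_{a+1}$ is finite, each of these three families is finite, so their union is a finite collection of rational convex cones all contained in the rational convex cone $C=\check\R^{b_{a+1}}_{\geq 0}$. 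At this point I would invoke Proposition \ref{cones}: there exists a regular fan $\Sigma_{a+1}$ with support $\check\R^{b_{a+1}}_{\geq 0}$ compatible with every cone of this combined collection. By construction such a $\Sigma_{a+1}$ is compatible with $(\pi^{-1}_{a+1,a}(\tau))_{\tau\in\Sigma_a}$, hence compatible with $\Sigma_a$ in the sense of the definition, and it is simultaneously compatible with the $(H_\ell)$ and $(C_\ell)$ for $R/K_{B_{a+1}}$, so it is a torific uniformizer for $R/K_{B_{a+1}}$ in the sense of Definition \ref{mizer}.

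It remains to verify the one point that is not purely formal, namely that this $\Sigma_{a+1}$ really does satisfy the defining properties of a torific uniformizer for the new ring — in particular that it contains a unique $r$-dimensional cone $\sigma_{\w,a+1}$ with the weight vector $\w_{a+1}=(\gamma_i)_{i\in B_{a+1}}$ in its relative interior and that $\sigma_{\w,a+1}\subset W_{a+1}$. This follows from Lemma \ref{weighin} applied to $B_{a+1}$: the coordinates of $\w_{a+1}$ generate a group of rational rank $r$, so by \cite[Lemma 3.10]{Te2} $\w_{a+1}$ cannot lie in a rational cone of dimension $<r$, forcing a unique $\sigma_{\w,a+1}\subset W_{a+1}$. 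The one compatibility I would check carefully is that this new distinguished cone is consistent with the old one, i.e.\ $\sigma_{\w,a+1}\subset\pi_{a+1,a}^{-1}(\sigma_{\w,a})$; this is where the structural input of Lemma \ref{Latt} enters, since it identifies $\pi_{a+1,a}$ on weight lattices with the dual of the injection $\Z^{b_a}/\Ll^{b_a-r}\hookrightarrow\Z^{b_{a+1}}/\Ll^{b_{a+1}-r}$ and guarantees $W_{a+1}\subset\pi^{-1}_{a+1,a}(W_a)$, so the weight vectors are compatible under projection. I expect this last coherence check — reconciling the new distinguished cone $\sigma_{\w,a+1}$ with the pulled-back subdivision while keeping it inside $W_{a+1}$ — to be the only genuine obstacle; everything else reduces to the existence statement of Proposition \ref{cones} applied to a finite collection of cones.
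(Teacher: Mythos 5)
Your proof is correct and takes essentially the same route as the paper, whose entire proof is the single sentence that the lemma follows immediately from Proposition \ref{cones}; you have simply made explicit the finite collection of rational cones (the pull-backs $(\pi^{-1}_{a+1,a}(\tau))_{\tau\in\Sigma_a}$, the hyperplanes $H_\ell$ and the cones $C_\ell$ attached to $R/K_{B_{a+1}}$) to which that proposition is applied. The final coherence check you flag concerning $\sigma_{\w_{a+1}}$ is not actually needed for the existence statement: it is guaranteed automatically by Lemma \ref{weighin} once the fan is a uniformizer, and the relation $\pi_{a+1,a}(\sigma_{\w_{a+1}})\subseteq\sigma_{\w_a}$ is established separately in the paper afterwards (Lemma \ref{inclusions}).
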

\begin{proof} This follows immediately from Proposition \ref{cones}.
\end{proof}
Let us denote by $\w_a$ the vector $(w(u_1),\ldots ,w(u_{b_a}))\in\check\R^{b_a}$ and by $\w_{a+1}$ the vector $(w(u_1),\ldots ,w(u_{b_a}),\ldots, w(u_{b_{a+1}}))\in\check\R^{b_{a+1}}$.\par\noindent By Lemma \ref{weighin}, the fan $\Sigma_a$ must contain a unique $r$-dimensional regular cone $\sigma_{\w_a}$ containing $\w_a$ in its relative interior, and similarly for $\Sigma_{a+1}$ and $\w_{a+1}$. 
Since the fan $\Sigma_{a+1}$ is a uniformizer, it must be compatible not only with the hyperplanes $H_\ell$ but also with the rational convex cones derived from the exponents appearing in the generators of the kernel of the map $k[[(u_i)_{i\in B_{a+1}}]]\to R/K_{B_{a+1}}$ as in Proposition \ref{Resol}.\par\noindent In particular, if $\gamma_{i(a)}={\rm min}_{i\notin B_a}\gamma_i$, this kernel contains the series $F_{i(a)}$ so that the cone $\sigma_{\w_{a+1}}$ must be entirely on the positive side of the hyperplane of $\check\Z^{b_{a+1}}$ linearly dual to the vector $\epsilon_{i(a)}-n^{i(a)}$, where $\epsilon_{i(a)}$ is the basis vector of $\Z^{b_{a+1}}$ corresponding to the variable $u_{i(a)}$.\par
Let us write $\sigma_{\w_{a+1}}=\langle \breve a^1,\ldots,\breve a^r\rangle$.
Since $\Sigma_{a+1}$ is compatible with $\Sigma_a$, we can choose a maximal regular cone $\sigma_{a+1}$ of $\Sigma_{a+1}$ containing $\sigma_{\w_{a+1}}$ and whose image by $\pi_{a+1,a}$ is contained in a maximal regular cone $\sigma_a$ of $\Sigma_a$.
\begin{lemma}\label{inclusions} We must then have $$\sigma_{\w_a}=\sigma_a\cap W_a,\ \sigma_{\w_{a+1}}=\sigma_{a+1}\cap W_{a+1},\  {\rm and}\ \pi_{a+1,a}(\sigma_{\w_{a+1}})\subseteq \sigma_{\w_a},$$
Moreover, $ \pi_{a+1,a}(\sigma_{\w_{a+1}})$ is of dimension $r$.
\end{lemma}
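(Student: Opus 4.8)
The plan is to deduce the three relations from the face structure of the maximal cones $\sigma_a$ and $\sigma_{a+1}$, reserving the dimension statement for a separate linear-algebra argument resting on Lemma \ref{Latt}.

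First I would establish the two equalities. Being a torific uniformizer, $\Sigma_a$ is compatible with its weight cone $W_a$ (Definition \ref{mizer}; indeed $W_a=(\bigcap_{\ell\in L}H_\ell)\cap\check\R^{b_a}_{\geq 0}$, the support of $\Sigma_a$ is $\check\R^{b_a}_{\geq 0}$, and $\Sigma_a$ refines the $H_\ell$, so $\sigma_a\cap W_a=\bigcap_\ell(\sigma_a\cap H_\ell)$ is an intersection of faces of $\sigma_a$, hence a face of $\sigma_a$). On the other hand $\sigma_{\w_a}$ is a cone of $\Sigma_a$ contained in the maximal cone $\sigma_a$, hence $\sigma_{\w_a}=\sigma_{\w_a}\cap\sigma_a$ is a face of $\sigma_a$, and by Lemma \ref{weighin} it lies in $W_a$; therefore $\sigma_{\w_a}\subseteq\sigma_a\cap W_a$. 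Now $\sigma_{\w_a}$ is $r$-dimensional while $\sigma_a\cap W_a\subseteq W_a$ has dimension at most $r=\dim W_a$, and an $r$-dimensional face contained in a face of dimension at most $r$ must coincide with it. This yields $\sigma_{\w_a}=\sigma_a\cap W_a$, and the identical reasoning with $a+1$ in place of $a$ gives $\sigma_{\w_{a+1}}=\sigma_{a+1}\cap W_{a+1}$.

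For the inclusion I would combine two containments. By the choice of $\sigma_{a+1}$ we have $\pi_{a+1,a}(\sigma_{a+1})\subseteq\sigma_a$, hence $\pi_{a+1,a}(\sigma_{\w_{a+1}})\subseteq\sigma_a$; and since $\sigma_{\w_{a+1}}\subseteq W_{a+1}\subseteq\pi_{a+1,a}^{-1}(W_a)$ we also have $\pi_{a+1,a}(\sigma_{\w_{a+1}})\subseteq W_a$. Using the equality just proved, $\pi_{a+1,a}(\sigma_{\w_{a+1}})\subseteq\sigma_a\cap W_a=\sigma_{\w_a}$.

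The hard part will be the dimension claim, i.e. ruling out that the projection collapses $\sigma_{\w_{a+1}}$. Here I would invoke Lemma \ref{Latt}(4): the restriction of $\pi_{a+1,a}$ to the $r$-dimensional affine hull $W_{a+1}^{\rm aff}$, landing in $W_a^{\rm aff}$, is the transpose of the injection $\Z^{b_a}/\Ll^{b_a-r}\hookrightarrow\Z^{b_{a+1}}/\Ll^{b_{a+1}-r}$ of Lemma \ref{Latt}(2). Since that injection is a map between free $\Z$-modules both of rank $r$, it has nonzero determinant, and so does its transpose; hence $\pi_{a+1,a}$ restricts to an isomorphism $W_{a+1}^{\rm aff}\to W_a^{\rm aff}$ of $\R$-vector spaces and preserves dimension. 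As $\sigma_{\w_{a+1}}\subseteq W_{a+1}^{\rm aff}$ is $r$-dimensional, its image $\pi_{a+1,a}(\sigma_{\w_{a+1}})$ is $r$-dimensional as well, completing the proof. The only genuine subtlety is making sure this transpose is again injective, which is exactly what the rank-$r$ equality of the two lattices guarantees; everything else is face bookkeeping.
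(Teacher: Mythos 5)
Your proof is correct, but it reaches the dimension statement by a genuinely different route than the paper. The paper's entire proof is a one-line reduction to Lemma \ref{weighin}: by construction $\pi_{a+1,a}(\w_{a+1})=\w_a$, so $\pi_{a+1,a}(\sigma_{\w_{a+1}})$ is a rational cone containing $\w_a$ and contained in a cone of $\Sigma_a$; since the coordinates of $\w_a$ generate a group of rational rank $r$, the vector $\w_a$ cannot lie in any rational cone of dimension $<r$ (\cite[Lemma 3.10]{Te2}), and this single irrationality argument simultaneously forces the two equalities, the inclusion, and the fact that the image has dimension exactly $r$. Your handling of the equalities and of the inclusion is the same face bookkeeping, just written out explicitly, and is sound (compatibility of the uniformizer with the traces of the $H_\ell$ makes $\sigma_a\cap W_a$ a face of $\sigma_a$, and two nested faces of equal dimension coincide). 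Where you genuinely diverge is the dimension count: instead of using the position of $\w_a$, you invoke Lemma \ref{Latt}(4) to identify the restriction of $\pi_{a+1,a}$ to $W_{a+1}^{\rm aff}$ with the dual of the injection $\Z^{b_a}/\Ll^{b_a-r}\hookrightarrow\Z^{b_{a+1}}/\Ll^{b_{a+1}-r}$ of rank-$r$ lattices, whose $\R$-extension is therefore an isomorphism $W_{a+1}^{\rm aff}\to W_a^{\rm aff}$, so the $r$-dimensional cone $\sigma_{\w_{a+1}}$ cannot collapse. This is valid and in fact proves something slightly stronger: injectivity of the projection on the whole affine hull, which is exactly the lattice-theoretic fact behind the paper's subsequent remark that the matrix $(e^j_i)_{1\leq i,j\leq r}$ need not be unimodular, i.e., the projection is an isomorphism over $\R$ while the induced toric map $Z(\sigma_{\w_{a+1}})\to Z(\sigma_{\w_a})$ may have degree $>1$. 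The paper's argument buys brevity and uniformity, one principle (rational independence of the weight vector) driving both Lemma \ref{weighin} and Lemma \ref{inclusions}; yours isolates the purely lattice-theoretic reason the projection preserves dimension, independently of where $\w_a$ sits inside the cone.
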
\begin{proof} The proof is the same as that of Lemma \ref{weighin} since by construction $\pi_{a+1,a}(\sigma_{\w_{a+1}})$ contains $\w_a$ and is contained in a cone of $\Sigma_a$.
\end{proof}
The following diagram of toric morphisms and inclusions of orbits and points
\[\xymatrix{&\ar[dd]^{\pi_{a+1,a}^*} Z(\Sigma_{a+1})& \hookleftarrow \ \ \ \ar[dd] Z(\sigma_{a+1}) &\hookleftarrow \ \ \ \ar[dd] o(\sigma_{\w_{a+1}}) \owns & \ar[dd] c_{a+1}
             \\
         & &   &&   \\
             & Z(\Sigma_a)&\hookleftarrow\ \  Z(\sigma_a) & \hookleftarrow \ \ \  o(\sigma_{\w_a})\  \owns & c_a} \]
 where the inclusions on the left are affine charts and those on the right are the inclusions of $r$-codimensional orbits and of the centers of $\nu_{B_a}$ in $Z(\sigma_a)$ and $\nu_{B_{a+1}}$ in $ Z(\sigma_{a+1})$, is commutative.\par\noindent
Thus, there are bases $$a^1,\ldots, a^r,a^{r+1},\ldots ,a^{b_a} \ {\rm and}\ \breve a^1,\ldots, \breve a^r,\breve a^{r+1},\ldots , \breve a^{b_{a+1}}$$ of the lattices $\check \Z^{b_a}$ and $\check\Z^{b_{a+1}}$, respectively generating the cones $\sigma_a$ and $\sigma_{a+1}$, and such that the vectors $a^1,\ldots, a^r$ generate the cone $\sigma_{\w_a}$ and $\breve a^1,\ldots, \breve a^r$ generate the cone  $\sigma_{\w_{a+1}}$.\par\noindent
  After Lemma \ref{inclusions}, in these bases the map of lattices $\pi_{a+1,a}\colon \check\Z^{b_{a+1}}\to \check \Z^{b_a}$ is described by:
     $$\begin{array}{lr}
\ \ \ \ \ \pi_{a+1,a}(\breve a^1)\ \ \ =e^1_1a^1+\cdots +e^r_1a^r\\
\ \ \ \ \ \ \ \ .\\
\ \ \ \ \ \ \ \ .\\
\ \ \ \ \ \ \ \ .\\
\ \ \ \ \  \pi_{a+1,a}(\breve a^r)\ \ \ =e^1_ra^1+\cdots +e^r_ra^r\\
\ \ \ \ \ \pi_{a+1,a}( \breve a^{r+1})=e^1_{r+1} a^1+\cdots +e^r_{r+1}a^r+e^{r+1}_{r+1}a^{r+1}+\cdots +e^{b_a}_{r+1}a^{b_a}\\
\ \ \ \ \ \ \ \ .\\
\ \ \ \ \ \ \ \ .\\
\ \ \ \ \ \ \ \ .\\
\ \ \ \ \ \pi_{a+1,a}( \breve a^{b_{a+1}})=e^1_{b_{a+1}} a^1+\cdots +e^r_{b_{a+1}} a^r+e^{r+1}{b_{a+1}} a^{r+1}+\cdots +e^{b_a}_{b_{a+1}} a^{b_a}\\

\end{array}$$

 with $e^j_i\in \Z_{\geq 0}$ since the image of $\sigma_{a+1}$ is contained in $\sigma_a$.\par 
 
 Taking now the dual bases of $\Z^{b_a}$ and $\Z^{b_{a+1}}$, which are respectively in bijection with generators $y_1,\ldots y_{b_a}$ and $\breve y_1,\ldots \breve y_{b_{a+1}}$ of the polynomial algebras corresponding to the free semigroups $\check\sigma_a\cap\Z^{b_a}$ and $\check\sigma_{a+1}\cap \Z^{b_{a+1}}$ and applying the general theory of toric morphisms, we obtain that:\par\noindent
 there are coordinates $y_1, \ldots, y_r,y_{r+1},\ldots ,y_{b_a}$ and $ \breve y_1, \ldots, \breve y_r,\breve y_{r+1},\ldots ,\breve y_{b_{a+1}}$ on $Z(\sigma_a)$ and $Z(\sigma_{a+1})$ respectively, such that the toric morphism\newline $Z(\sigma_{a+1})\to Z(\sigma_a)$ is described by:
$$\begin{array}{lr}
 \ \ \ \ \ \ \ \ \ \ \ \ \ \ \ \ y_1\mapsto\ \ \ \breve y_1^{e^1_1}\ldots \breve y_r^{ e^r_1}
\ \ \ \ \ \ \ \ \ \ \ \ \ \ \ \ .\\
\ \ \ \ \ \ \ \ \ \ \ \ \ \ \ \ .\\
(**)\ \ \ \ \ \ \ \ \ \ \ .\\
\ \ \ \ \ \ \ \ \ \ \ \ \ \ \ \ y_r\mapsto\ \ \ \breve y_1^{e^1_r}\ldots\breve y_r^{e^r_r}\\
\ \ \ \ \ \ \ \ \ \ \ \ \ \ \ \ y_{r+1}\mapsto\breve y_1^{e^1_{r+1}}\ldots \breve y_r^{e^r_{r+1}}\breve y_{r+1}^{e^{r+1}_{r+1}}\ldots \breve y_{b_{a+1}}^{e^{b_{a+1}}_{r+1}}\\
\ \ \ \  \ \ \ \ \ \ \ \ \ \ \ \ .\\
\ \ \ \ \ \ \ \ \ \ \ \ \ \ \ \ .\\
\ \ \ \ \ \ \ \ \ \ \ \ \ \ \ \ .\\
\ \ \ \ \ \ \ \ \ \ \ \ \ \ \ \ y_{b_a}\mapsto\breve y_1^{e^1_{b_a}}\ldots \breve y_r^{e^r_{b_a}}\breve y_{r+1}^{e^{r+1}_{b_a}}\ldots  \breve y_{b_{a+1}}^{e^{b_{a+1}}_{b_a}},\\

\end{array}$$
\noindent
where, as we saw, the $e^j_i$ are the coordinates in the basis of the generators of $\sigma_a$ of the images by $\pi_{a+1,a}$ of the generators, properly numbered,  $\breve a^1,\ldots \breve a^r ,\ldots ,\breve a^{b_{a+1}}$ of the cone $\sigma_{a+1}$.\par\noindent
We note that the matrix of the $(e^j_i), 1\leq i,j\leq r$ is not unimodular in general because $\pi_{a+1,a}(W_{a+1}^{\rm aff}\cap\check \Z^{b_{a+1}})$ may be a strict sublattice of $W_a^{\rm aff}\cap\check \Z^{b_a}$, and thus may correspond to a toric morphism $Z(\sigma_{\w_{a+1}})\to Z(\sigma_{\w_a})$ of degree $>1$.\par
 For $i\in B_a$, consider, in the completion $k[[y_1,\ldots ,y_r]]$ of the algebraic transform $\tilde R_a$ of $R/K_{B_a}$ by the toric modification of $\A^{b_a}(k)$ corresponding to the fan $\Sigma_a$, the image $\xi_i^{(a)}$ of $\xi_i$ by the composed morphism
 $$R\longrightarrow R/K_{B_a}\hookrightarrow \tilde R_a\hookrightarrow k[[y_1,\ldots ,y_r]].$$ By Proposition \ref{OstroAbh}, we have
$$\xi_i^{(a)}=y_1^{a^1_i}\ldots y_r^{a^r_i}(c_{r+1}+z_{r+1})^{a^{r+1}_i}\ldots (c_{b_a}+z_{b_a})^{a^{b_a}_i},\leqno{(S)}$$
where $c_j\in k^*$ and the $z_j$ are power series without constant term in $y_1,\ldots ,y_r$. The $c_j$ depend on $a$ since they are coordinates of the center of the valuation in $o(\sigma_{\w_a})$.\par
If we embed $k[[y_1,\ldots ,y_r]]$ in $k[[t^{\Phi_{\geq 0}}]]$ by $y_i\mapsto t^{\nu(y_i)}$, we have ${\rm in}_{\nu_t} \xi_i^{(a)}=c_{r+1}^{a^{r+1}_i}\ldots c_{b_a}^{a^{b_a}_i}t^{\gamma_i}$, where $\nu_t$ denotes the $t$-adic valuation of $k[[t^{\Phi_{\geq 0}}]]$.
\end{section}

\begin{section}{The embedded Kaplansky embedding Theorem for rational valuations of rank one}\label{final}
In this section we show that if the rational valuation $\nu$ is of rank one, the embedded Kaplansky embeddings for the Abhyankar semivaluations on the quotients $R/K_{B_a}$ give rise to embedded Kaplansky embeddings for $R$. Our index set $I$ is assumed to be infinite and of ordinal $\omega$.\par In characteristic zero, examples of rank one valuations with infinitely generated semigroup on the ring $R=k[[x,y]]$ can be obtained from series $y(x)\in k[[x^{\Q_{\geq 0}}]]$ with a well ordered set of exponents having unbounded denominators. By Newton-Puiseux, for any non-zero $h(x,y)\in k[[x,y]]$ we have $h(x,y(x))\neq 0$ and its order in $x$ determines the valuation of $h(x,y)$, with values in $\Q_{\geq 0}$. Generators of the ideals $K_B\subset k[[x,y]]$ of section \ref{App} then appear as the equations of the formal branches whose Puiseux expansions are suitable truncations, with exponents having bounded denominators, of the series $y(x)$. The truncations must correspond to keeping only the exponents which have a common denominator. The generators $\gamma_i$ of the semigroup are the valuations of these equations, which corresponds to the first part of Assertion $2)$ of Theorem \ref{A}. For references and more details see \cite[Example 4.19]{Te1}.\par  A different approach to the Kaplansky embedding theorem for rings in the mixed characteristic case can be found in the work of San Saturnino, see \cite[Theorem 4.1]{S}. We believe that the method presented here, beginning with the valuative Cohen Theorem, can be extended to the mixed characteristic case.
\begin{theorem}\label{Kap}{\rm (Embedded Kaplansky embedding for rational valuations of rank one)}
Let $\nu$ be a zero dimensional valuation of rank one on a complete noetherian equicharacteristic local domain $R$ with algebraically closed residue field $k$. Fix a field of representatives $k\subset R$. Let $(\xi_i)_{i\in I}$ in $R$ be such that the valuative Cohen Theorem holds and let $$\pi\colon\widehat{k[(u_i)_{i\in I}]}\to R, \ u_i\mapsto\xi_i$$
be the presentation given by the valuative Cohen Theorem. Let $\Phi$ denote the value group of the valuation. Then there exist Hahn series $\xi_i(t)\in k[[t^{\Phi_{\geq 0}}]]$, $$\xi_i(t)=\rho_i t^{\gamma_i}+\sum_{\delta>\gamma_i}c^{(i)}_\delta t^\delta \ {\rm with}\  c^{(i)}_\delta\in k,\ \rho_i\in k^*$$ such that $R$ is the image of the morphism $$\widehat{k[(u_i)_{i\in I}]}\to k[[t^{\Phi_{\geq 0}}]],\  u_i\mapsto \xi_i \mapsto\xi_i(t).$$
The valuation $\nu$ on $R$ is induced from the $t$-adic valuation of $ k[[t^{\Phi_{\geq 0}}]]$ through the inclusion $R\subset k[[t^{\Phi_{\geq 0}}]]$ determined by $\xi_i\mapsto\xi_i(t)$.
\end{theorem}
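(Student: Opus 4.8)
The plan is to realize the embedding of $R$ as a limit of the embedded Kaplansky embeddings already available, through Proposition \ref{OstroAbh}, for the Abhyankar semivaluations $\nu_{B_a}$ on the quotients $R/K_{B_a}$. Since $\nu$ has rank one the index ordinal is $I\leq\omega$, so Theorem \ref{A} furnishes a \emph{countable} nested sequence $B_0\subset B_1\subset\cdots$ with $\bigcup_a B_a=I$, $\bigcap_a K_{B_a}=(0)$, each $\nu_{B_a}$ being a rank one rational valuation with finitely generated semigroup $\Gamma_{B_a}$. First I would fix, as in Section \ref{PSTI} and using Lemma \ref{compat}, a \emph{projective system} of compatible torific uniformizers $\Sigma_a$. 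Applying Proposition \ref{OstroAbh} to each level produces, for every $i\in B_a$, a Hahn series $\xi_i^{(a)}(t)\in k[[t^{\Phi_{\geq 0}}]]$, namely the image of $\xi_i$ under $R\to R/K_{B_a}\hookrightarrow k[[y_1,\ldots,y_r]]\hookrightarrow k[[t^{\Phi_{\geq 0}}]]$; by formula $(S)$ it has initial term $\rho_i^{(a)}t^{\gamma_i}$ with $\rho_i^{(a)}=c_{r+1}^{a^{r+1}_i}\cdots c_{b_a}^{a^{b_a}_i}\in k^*$.

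The heart of the argument is the coherence of these series along the system. For fixed $i$ and $a\geq a_0(i)$ I would establish
$$\nu_t\bigl(\xi_i^{(a+1)}(t)-\xi_i^{(a)}(t)\bigr)\ \geq\ \nu(K_{B_a})\ =\ \min_{j\notin B_a}\gamma_j,$$
the last equality being Assertion $2)$ of Theorem \ref{A}. This rests on the commutative diagram of toric morphisms of Section \ref{PSTI}: the morphism $Z(\sigma_{a+1})\to Z(\sigma_a)$ described by the monomial formulas $(**)$ is compatible with the valuation and carries the center $c_{a+1}$ to $c_a$, so that the two embeddings of a given $x\in R$ can differ only through the ideal $K_{B_a}$ killed at level $a$, whose value is $\nu(K_{B_a})$. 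Because $R$ is noetherian, $\Gamma$ has no accumulation point in $\R$ (the phenomenon contrasted with the non-noetherian Artin--Schreier situation of Example \ref{A_S}), hence the generators $\gamma_j$ are cofinal and $\nu(K_{B_a})\to+\infty$. The sequence $\bigl(\xi_i^{(a)}(t)\bigr)_a$ is therefore not merely pseudo-convergent but $t$-adically convergent, and by spherical completeness (Theorem \ref{complete}) its limit $\xi_i(t)$ lies in $k[[t^{\Phi_{\geq 0}}]]$; moreover each exponent of $\xi_i(t)$, being $<\nu(K_{B_a})$ for some $a$, lies in $\Gamma_{B_a}\subseteq\Gamma$, so the support is automatically well ordered. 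The compatibility $c_{a+1}\mapsto c_a$ forces $\rho_i^{(a)}$ to stabilize to a fixed $\rho_i\in k^*$, whence $\xi_i(t)=\rho_it^{\gamma_i}+\sum_{\delta>\gamma_i}c^{(i)}_\delta t^\delta$, the $\rho_i$ being exactly the parameters of the graded isomorphism ${\rm gr}_\nu R\simeq k[t^\Gamma]$.

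It then remains to assemble the $\xi_i(t)$ into the asserted morphism. I would set $\Xi\colon\widehat{k[(u_i)_{i\in I}]}\to k[[t^{\Phi_{\geq 0}}]]$, $u_i\mapsto\xi_i(t)$, and check it is a continuous morphism of valued $k$-algebras, the well-ordering of supports being guaranteed by the target. That $\Xi$ factors through $\pi$ is then formal: each generator $F_\ell$ of $\ker\pi$ involves only finitely many variables by Assertion $3)$ of Theorem \ref{Cohen}, and since substitution commutes with the continuous embedding $\phi_a$ of $R/K_{B_a}$ one has $F_\ell\bigl(\xi_\bullet^{(a)}(t)\bigr)=\phi_a(\pi(F_\ell))=0$ for all large $a$; passing to the limit gives $\Xi(F_\ell)=F_\ell\bigl(\xi_\bullet(t)\bigr)=0$. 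Hence $\Xi$ descends to an inclusion $R\subset k[[t^{\Phi_{\geq 0}}]]$. Finally, for $x\in R\setminus\{0\}$ Assertion $3)$ of Theorem \ref{A} provides $a(x)$ with $\nu_t\bigl(\xi_\bullet^{(a)}\text{-image of }x\bigr)=\nu_{B_a}(x\bmod K_{B_a})=\nu(x)$ for $a\geq a(x)$, and the convergent limit does not disturb this initial value; thus $\nu_t(\Xi(x))=\nu(x)<\infty$, proving at once the injectivity of $\Xi$ on $R$ and that $\nu$ is induced from $\nu_t$.

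\textbf{The main obstacle.} I expect the genuinely delicate step to be the coherence estimate displayed above, that is, controlling infinitely many successive torific modifications simultaneously and proving that their embedded images stabilize to ever higher $t$-adic order. Everything downstream (convergence, well-ordered support, factorization through $\ker\pi$, and preservation of the valuation) follows rather formally once this is in place, precisely because the no-accumulation property of $\Gamma$ for noetherian $R$ sends $\nu(K_{B_a})\to+\infty$ and so removes any breadth ambiguity in the limits. The real work is the verification, through the commutative diagram and the monomial formulas $(**)$ of Section \ref{PSTI}, that the discrepancy between the level-$a$ and level-$(a+1)$ embeddings is accounted for entirely by the ideal $K_{B_a}$ and that the connecting toric morphism is valuation-compatible; it is here, rather than in any termwise argument, that the \emph{projective} compatibility of the torific uniformizers is indispensable.
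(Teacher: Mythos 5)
Your proposal reproduces the architecture of the paper's proof exactly: approximate $\nu$ by the Abhyankar semivaluations $\nu_{B_a}$ of Theorem \ref{A}, fix a projective system of compatible torific uniformizers via Lemma \ref{compat}, apply Proposition \ref{OstroAbh} at each level to get series $\xi_i^{(a)}(t)$, establish the coherence estimate $\nu_t\bigl(\xi_i^{(a+1)}(t)-\xi_i^{(a)}(t)\bigr)\geq\min_{j\notin B_a}\gamma_j$, and use cofinality of the $\gamma_j$ together with the absence of accumulation points (\cite[Section 3]{C-T}) to get limits $\xi_i(t)$ with zero breadth ambiguity. However, the step you yourself flag as ``the main obstacle'' is precisely the content of the paper's proof, and the justification you sketch for it would not work as stated. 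You assert that the level-$a$ and level-$(a+1)$ embeddings of a given element ``can differ only through the ideal $K_{B_a}$ killed at level $a$,'' resting this on the toric diagram and the compatibility of centers $c_{a+1}\mapsto c_a$. But the ideals $K_{B_a}$ and $K_{B_{a+1}}$ are kernels of genuinely different semivaluations and are \emph{not} nested, so there is no direct sense in which the level-$(a+1)$ quotient ``remembers'' $K_{B_a}$; the toric compatibility alone gives no estimate. The paper's actual mechanism is ideal-theoretic: from the definition of the truncations $F_\ell\vert B$ one derives the inclusion $K_{B_a}\subset K_{B_{a+1}}+\bigl((\xi_j)_{j\notin B_a}\bigr)$, whence the images of $\xi_i$ in the two quotients agree in $R/\bigl(K_{B_{a+1}}+((\xi_j)_{j\notin B_a})\bigr)$, an ideal of value $\min_{j\notin B_a}\gamma_j$; the monomial formulas $(**)$ serve only to make the two series comparable inside one copy of $k[[t^{\Phi_{\geq 0}}]]$. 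Without this inclusion your key inequality is asserted, not proved.

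A secondary divergence is the endgame. To conclude that $\nu_t(\Xi(x))=\nu(x)$ for all $x\in R\setminus\{0\}$ you invoke Theorem \ref{A}, assertion 3), plus the claim that ``the convergent limit does not disturb this initial value''; that claim is itself a coherence estimate for \emph{arbitrary} $x\in R$, not just for the generators $\xi_i$, and proving it would require interchanging the limit over $a$ with the substitution of the (possibly infinite) series expressing $x$ in the $\xi_i$ --- which you do not do. The paper sidesteps this entirely: since the initial forms stabilize, the assignment ${\rm in}_\nu\xi_i\mapsto \rho_i t^{\gamma_i}$ realizes the graded isomorphism ${\rm gr}_\nu R\simeq k[t^\Gamma]\subset k[t^{\Phi_{\geq 0}}]$ of section \ref{secCoh}, and injectivity of $R\to k[[t^{\Phi_{\geq 0}}]]$ together with preservation of the valuation follows at once from injectivity at the graded level. (A further small inaccuracy: the exponents of $\xi_i^{(a)}(t)$ lie in the free semigroup generated by the $\nu(y_s)$, not in $\Gamma_{B_a}\subseteq\Gamma$ as you claim, since the $y_s$ are Laurent monomials in the $\xi_j$; this is harmless because spherical completeness of $k[[t^{\Phi_{\geq 0}}]]$ already guarantees well-ordered support of the limit.)
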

\begin{proof} 
We juxtapose the formulas $(S)$ at the end of the preceding section for $B_a$ and $B_{a+1}$ and map $R$ to $k[[t^{\Phi_{\geq 0}}]]$ through  $R/K_{B_{a+1}}$ by mapping $\breve y_i$ to $t^{\nu (\breve y_i)}$. Then we map $\tilde R_a$ to $\tilde R_{a+1}$ and so $k[[y_1,\ldots ,y_r]]$ to $k[[\breve y_1,\ldots ,\breve y_r]]$ through the morphism described by $(**)$. \par\noindent The images $\xi_i^{(a)}$ and $\xi_i^{(a+1)}$ now both have expressions as series in the $\breve y_i$ with $1\leq i\leq r$. \par\noindent We define embeddings of $k$-algebras $k[[y_1,\ldots ,y_r]]\hookrightarrow k[[t^{\Phi_{\geq 0}}]]$ and $k[[\breve y_1,\ldots ,\breve y_r]]\hookrightarrow k[[t^{\Phi_{\geq 0}}]]$ by $y_s\mapsto t^{\nu (y_s)}$ and $\breve y_s\mapsto t^{\nu (\breve y_s)}$ respectively. \par\smallskip\noindent
In view of the monomial nature of the morphism described by $(**)$, the first embedding coincides with the composition 
$$k[[y_1,\ldots ,y_r]]\hookrightarrow k[[\breve y_1,\ldots ,\breve y_r]]\hookrightarrow k[[t^{\Phi_{\geq 0}}]],\ \ \breve y_s\mapsto t^{\nu (\breve y_s)}.$$
We may now view the images of $\xi_i^{(a)}$ and $\xi_i^{(a+1)}$ as series in $t$, denoted by $\xi_i^{(a)}(t), \xi_i^{(a+1)}(t)$.\par
   It follows from the definition of the ideals $K_{B_a}$ that we have the inclusions $K_{B_a}\subset K_{B_{a+1}}+((\xi_j)_{j\notin B_a})$: indeed, the difference between the $F_\ell\vert B_{a+1}$ and the $F_\ell\vert B_a$ concerns only those $F_\ell$ whose initial binomial either\begin{enumerate} \item is in $k[(U_i)_{i\in B_a}]$ and in those, in the definition of the $F_\ell\vert B_{a+1}$, we keep the variables $u_i$ with $i\in B_{a+1}\setminus B_a$ so that the images in $R$ differ by an element of the ideal generated by the $(\xi_j)_{j\in B_{a+1}\setminus B_a}$, or
\item  involves a variable $u_i$ with $i\in B_{a+1}\setminus B_a$ and then its image in $R$ is in the ideal generated by the $(\xi_j)_{j\notin B_{a+1}}$. 
\end{enumerate}
In both cases, their images in $R$ therefore differ only by an element of the ideal generated by $(\xi_j)_{j\notin B_a}$. The valuation of this ideal is ${\rm min}_{j\notin B_a}\gamma_j$.\par\smallskip

By Theorem \ref{A}, for each index $i\in I$ there is a smallest integer $a(i)$ depending on $i$ such that $i\in B_{a(i)}$, $\xi_i\notin K_{B_{a(i)}}$ and $\nu_{B_a}(\xi_i \ {\rm mod.} K_{B_a})=\gamma_i$ for $a\geq a(i)$.\par\noindent Given $a\geq a(i)$, the images of $\xi_i$ in $R/K_{B_a}$ and $R/K_{B_{a+1}}$ have the same image in  $R/K_{B_{a+1}}+((\xi_j)_{j\notin B_a})$. The series $\xi_i^{(a)}(t)$ and $\xi_i^{(a+1)}(t)$ with $i\in B_a$ which we have just seen are therefore equal modulo the ideal of $k[[t^{\Phi_{\geq 0}}]]$ generated by the  $(\xi_j^{(a+1)}(t))_{j\notin B_a}$  and thus differ by an element of $t$-adic value $\geq {\rm min}_{j\notin B_a}\gamma_j$. Let us denote this value by $\gamma_a$.\par\noindent The $\gamma_i$ are cofinal in $\Gamma$ because $\Gamma$ has ordinal $\omega$ and for every $a$ there is a $t(a)\in\N$ such that  have $\gamma_i<\gamma_a<\gamma_{a+t(a)}$.\par\noindent From the first inequality we deduce that ${\rm in}_{\nu_t}\xi_i^{(a+1)}(t)= {\rm in}_{\nu_t}\xi_i^{(a)}(t)$ so that ${\rm in}_\nu \xi_i$ has a well defined image in ${\rm gr}_{\nu_t}k[[t^{\Phi_{\geq 0}}]]=k[t^{\Phi_{\geq 0}}]$. This entails relations between the $c_j$ and the $\breve c_j$, which follow from  the fact that they are coordinates of the centers of the same valuation. \par\noindent
With our usual notations, the preceding discussion implies
$$\xi_i^{(a+1)}(t)-\xi_i^{(a)}(t)\in \Pp_{\gamma_a}( k[[t^{\Phi_{\geq 0}}]]).\leqno{(PC)}$$

Given $i\in I$, we start from $B_{a(i)}$. Using Lemma \ref{compat} we build a projective system of torific embeddings as in section \ref{PSTI}. We obtain a sequence of series $(\xi_i^{(a)}(t))_{a\geq a(i)}$ satisfying the inclusions $(PC)$.\par\noindent
It follows that, perhaps after restricting the indices $a$ to a cofinal subset of $\N$, the $(\xi_i^{(a)}(t))_{a\geq a(i)}$ form a pseudo-convergent sequence in $k[[t^{\Phi_{\geq 0}}]]$.\par\noindent By \cite[Section 3]{C-T}, the semigroup $\Gamma$ can have no accumulation point in $\R$, so that the breadth of the pseudo-convergent sequence is zero and its limit is an element $\xi_i(t) \in k[[t^{\Phi_{\geq 0}}]]$ which is well defined and independent of $a\geq a(i)$. Its $\nu_t$-initial form can be denoted by $\rho_i t^{\gamma_i}$. \par
 As we saw in section \ref{secCoh}, the map of graded $k$-algebras determined by ${\rm in}_\nu \xi_i\mapsto \rho_i t^{\gamma_i}$ establishes an isomorphism between ${\rm gr}_\nu R$ and $k[t^\Gamma]\subset k[t^{\Phi_{\geq 0}}]$. This implies that the morphism of $k$-algebras $R\to k[[t^{\Phi_{\geq 0}}]]$ determined by $\xi_i\mapsto\xi_i(t)$ is injective.\end{proof}
 \begin{subsection} {Remarks}

1) As the index $a\geq a(i)$ grows, the images in $\sigma_{\w_{a(i)}}$ of the $\sigma_{\w_{a+1}}$ by the compositions of the maps $\pi_{a+1,a}$ become smaller and smaller and the value semigroups $\N^r\subset \Phi_{\geq 0}$ of the $k[[y_1,\ldots ,y_r]]$ become larger and larger. If for some $a_0$ the image of the cone $\sigma_{\w_{a+1}}$ is equal to $\sigma_{\w_a}$ for $a\geq a_0$, the semigroup $\Phi_{\geq 0}$ is equal to $\N^r$ so that it is well ordered and this can happen only if $r=1$.\par\smallskip\noindent
2) The same argument shows that if we had started from a different sequence of finite subsets $B'_{a'}$ approximating the ordinal $I$, and thus a different sequence of ideals $K_{B'_{a'}}$, the limits $\xi_j(t)$ would be the same since we have inclusions of the form $B_a\subset B'_{a'}\subset B_{a+k}\subset B'_{a'+\ell}$.\par\smallskip\noindent 
3) There remains the problem of understanding to what extent the embedded Kaplansky embeddings are independent, up to an inner automorphisme of $k[[t^{\Phi_{\geq 0}}]]$, of the projective system of torific uniformizers.\par\smallskip\noindent
4) The valued fields which are the fraction field of an excellent, and in particular noetherian, local domain with an algebraically closed residue field are rather special. Apart from the completeness, the noetherianity of $R$ plays an essential role in this text, first by the valuative Cohen Theorem, and second by the fact that the semigroups of values are well ordered and have no accumulation points in $\Phi\otimes_\Z\R$. It would be interesting to revisit from this perspective, beginning with \cite{Ka} and \cite{Ka2}, the existing counterexamples to the uniqueness of Kaplansky embeddings in the theory of valued fields. This applies also to the condition $\Phi=p\Phi$ when $k$ is of characteristic $p$ as suggested by examination of the proofs of Lemmas 12 and 13 in \cite{Ka} or of example \ref{A_S} when one replaces $k((x))$ by its perfect closure.\par\smallskip\noindent
5) The author is convinced that there is a result similar to Theorem \ref{Kap} for rank $>1$ but then the breadths of the pseudo-convergent sequences built from the $R/K_{B_a}$ are more difficult to understand and therefore so is what determines the embedded Kaplansky embedding.
\end{subsection}

\subsection{Aknowledgements} The author is very grateful to the referees, especially the one known to him as $X_0$, whose informed and constructive criticisms have led to very substantial improvements in the text.

\end{section}

\end{document}